\documentclass{amsart}
\usepackage{amscd}
\usepackage{amssymb}
\usepackage{amsmath}
\usepackage{amsthm,hyperref}

\usepackage{color}

\title{Random triangular Burnside groups}
\author{Dominik Gruber}
\email{dominik.gruber@math.ethz.ch}
\address{Department of Mathematics \\ ETH Zurich \\ Zurich, CH}
\author{John M. Mackay}\thanks{The authors would like to thank the Isaac Newton Institute for Mathematical Sciences, Cambridge for support and hospitality during the programme ``Non-positive curvature, group actions and cohomology'' where work on this paper was undertaken, supported by EPSRC grant EP/K032208/1.  The research of the second author was also supported in part by EPSRC grant EP/P010245/1.}
\email{john.mackay@bristol.ac.uk}
\address{School of Mathematics \\ University of Bristol \\ Bristol, UK}
\date{\today}
\subjclass[2010]{20F50, 20F65, 20P05}

\usepackage{amsmath,amsthm,amsfonts,graphicx,amssymb}
\usepackage{hyperref}
\AtBeginDocument{%
   \def\MR#1{}
}

\numberwithin{equation}{section}
\newtheorem{theorem}[equation]{Theorem}
\newtheorem{proposition}[equation]{Proposition}
\newtheorem{corollary}[equation]{Corollary}
\newtheorem{lemma}[equation]{Lemma}

\newtheorem{notation}[equation]{Notation}

\newtheorem{question}[equation]{Question}

\newtheorem{definition}[equation]{Definition}
\newtheorem{remark}[equation]{Remark}

\newtheoremstyle{citing}
  {3pt}
  {3pt}
  {\itshape}
  {}
  {\bfseries}
  {}
  {.5em}
  {\thmnote{#3}}

\theoremstyle{citing}

\newcommand{\cP}{\mathcal{P}}

\newcommand{\ra}{\rightarrow}
\newcommand{\R}{\mathbb{R}}

\newcommand{\N}{\mathbb{N}}
\newcommand{\Z}{\mathbb{Z}}

\DeclareMathOperator{\Red}{Red}
\DeclareMathOperator{\Cancel}{Cancel}

\newcommand{\bestboundx}{\frac{11}{12}-\frac{\sqrt{41}}{12} \approx 0.38307}
\newcommand{\bestboundshort}{\frac{11}{12}-\frac{\sqrt{41}}{12}}

\begin{document}

\begin{abstract}
We introduce a model for random groups in varieties of $n$-periodic groups as $n$-periodic quotients of triangular random groups. We show that for an explicit $d_{\mathrm{crit}}\in(1/3,1/2)$, for densities $d\in(1/3,d_{\mathrm{crit}})$ and for $n$ large enough, the model produces \emph{infinite} $n$-periodic groups. As an application, we obtain, for every fixed large enough $n$, for every $p\in (1,\infty)$ an infinite $n$-periodic group with fixed points for all isometric actions on $L^p$-spaces.
Our main contribution is to show that certain random triangular groups are uniformly acylindrically hyperbolic.
\end{abstract}

\maketitle

\section{Introduction}\label{sec-intro}

  From the viewpoint of combinatorial group theory, random groups are a natural object to study: what are the typical properties of quotients of free groups by normal subgroups generated by randomly chosen elements?  The study of such questions took off in the late 1980s, beginning with pioneering work of Gromov~\cite{Gro-87-hyp-groups, Gro-91-asymp-inv, Gro-00-spaces-and-qs, Gro-03-rw-rg}, and vastly developed since by Arzhantseva, Champetier, Ol'shanskii, Ollivier and many others; for a survey see Ollivier~\cite{Oll-05-rand-grp-survey}.  
  
  It is natural to consider different families of random groups by replacing the free (non-abelian) groups by some other groups.  Using a free abelian group leads to random abelian groups, studied by Dunfield--Thurston~\cite{Dun-Thu-06-Random-3-mflds} and others; using a free nilpotent group leads to random nilpotent groups studied by Cordes--Duchin--Duong--Ho--S\'anchez~\cite{CDDHS-18-Random-nilpotent}.
  
  In this paper we introduce a model for random Burnside groups, by taking suitable quotients of free Burnside groups.
  Recall that the free Burnside group $B(m,n)$ of rank $m$ and exponent $n$ is given by $B(m,n)\cong F_m/F_m^n$, where $F_m$ is the free group of rank $m$ and, for a group $G$, we let $G^n:=\langle g^n:g\in G\rangle\leq G$, i.e.\ the verbal subgroup corresponding to $w^n$. 
  Free Burnside groups themselves are mysterious: finite for $n=2,3,4,6$ \cite{Burnside,Sanov,Hall} but, by major work of Novikov, Adian, Ivanov, and Lysenok known to be infinite for large $n$ \cite{Novikov-Adjan-1,Novikov-Adjan-2,Novikov-Adjan-3,Ivanov,Lysenok}.
  
  We define a triangular model for quotients of free Burnside groups as an extension of the usual triangular model for random groups, which are defined using a `density' parameter $d \in (0,1)$ determining how many random relations of length $3$ are chosen.  Our goal is to show that for a suitable range of $d$, typical random triangular Burnside groups are both infinite and not free Burnside groups.
  In order to do so, we show the independently interesting result that random triangular groups are uniformly acylindrically hyperbolic, as we discuss further below.

Formally, a generating set $S$ of a group $G$ is an epimorphism $F(S)\to G$, where $F(S)$ is the free group on $S$.
\begin{definition}\label{def:model}
Let $\mathcal G:=(G_m)$ be a sequence of groups with generating sets $S_m$ of size $m$. Let $d \in (0,1)$ be chosen, called the \emph{density}. Let $\cP$ be a property of a group. For each $m \in \N$, consider the probability distribution on the set of all tuples $R$ of elements of $F(S_m)$ found by choosing $\lfloor(2m-1)^{3d}\rfloor$ cyclically reduced words of length $3$ uniformly and independently at random. We say that a \emph{random triangular quotient of $\mathcal G$ at density $d$ has property $\cP$ asymptotically almost surely (a.a.s.)} if the probability that the quotient of $G_m$ by $R$ has $\cP$ goes to $1$ as $m \ra \infty$.

We call a random triangular quotient of the sequence of free groups of rank $m$ with canonical generating sets a \emph{random triangular group}. Given $n>0$, we call a random triangular quotient of  the sequence of free Burnside groups of rank $m$ and exponent $n$ with canonical generating sets a \emph{random triangular $n$-periodic group}.
\end{definition}
Some references instead choose $R$ as a random tuple of $\lfloor (2m-1)^{3d}\rfloor$ \emph{distinct} words; this variation leads to equivalent models at densities $d<\frac12$ \cite[I.2.c]{Oll-05-rand-grp-survey}.
Our main result is the following: 
\begin{theorem}\label{thm:main}
	For each $d_0<\bestboundx$ there exists $n_0$ so that for each $n\geq n_0$ and for each $0 < d \leq d_0$, a random triangular $n$-periodic group $G$ at density $d$ is infinite.
\end{theorem}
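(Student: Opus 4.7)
The proof splits into a geometric stage and an algebraic stage. First, note that if $G=F_m/\langle\langle R\rangle\rangle$ is the random triangular group obtained from the same tuple $R$, then the random triangular $n$-periodic group is canonically isomorphic to $G/G^n$ (by comparing the normal closures $\langle\langle R\rangle\rangle\cdot F_m^n$ inside $F_m$). Theorem~\ref{thm:main} therefore reduces to the assertion that, a.a.s.\ in $m$, the quotient $G/G^n$ is infinite for every $n\geq n_0$.

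The geometric stage is the contribution highlighted in the abstract: at density $d<\bestboundshort$, a random triangular group is a.a.s.\ hyperbolic \emph{with rank-independent geometric invariants} (hyperbolicity constant $\delta$, a suitable injectivity radius, and the translation length and acylindricity parameters of a distinguished loxodromic element), i.e.\ uniformly acylindrically hyperbolic. I would prove this by a probabilistic analysis of reduced van Kampen diagrams over the presentation $\langle S_m\mid R\rangle$, sharpening the Gromov/\.Zuk/Ollivier counting arguments that already give qualitative hyperbolicity at density $d<1/2$. The sharpening has to yield not merely a linear isoperimetric inequality but one with an absolute constant, together with the absence of short torsion and of long collisions between geodesics. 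The explicit threshold $\bestboundshort$ comes from optimising the resulting counting: it is the largest density for which the expected number of "bad" configurations (tuples of relators with too-long shared subwords) remains negligible, and it is pinned down by a quadratic inequality whose root is exactly $11/12-\sqrt{41}/12$.

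Given the uniform constants from the first stage, the algebraic stage applies a theorem on periodic quotients of hyperbolic groups, in the spirit of Delzant--Gromov and its quantitative refinements due to Coulon: there exists $n_0$, depending only on the hyperbolicity constant, the injectivity radius, and the parameters of a loxodromic element, such that $G/G^n$ is infinite (and again acylindrically hyperbolic) for every odd $n\geq n_0$. Since the first stage furnishes these constants independently of $m$, a single $n_0$ works for all sufficiently large $m$, and combining with the a.a.s.\ conclusion of the first stage gives Theorem~\ref{thm:main}.

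The main obstacle is the geometric stage. Below the classical $C'(1/6)$ threshold, rank-independent hyperbolicity constants are essentially automatic, but for densities above $1/3$ one must handle triples of relators that overlap along macroscopic arcs, and the standard small-cancellation toolkit is unavailable. The difficulty is not hyperbolicity per se but its \emph{quantitative, uniform} form: one needs numerical invariants sharp enough to feed into Coulon's periodic-quotient machinery, and it is precisely the probabilistic cost of ensuring this uniformity that produces the explicit density bound $\bestboundshort$ rather than the optimal $1/2$.
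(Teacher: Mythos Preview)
Your two-stage outline matches the paper's strategy: identify $G/G^n$ with the random $n$-periodic group, establish uniform (rank-independent) geometric control on the random triangular group $G$, then invoke a Coulon-type periodic-quotient theorem. A few points where your description diverges from what the paper actually does are worth flagging.

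First, the geometric invariants the paper controls are not an injectivity radius or parameters of a ``distinguished loxodromic element'': the paper proves uniform \emph{acylindricity} of the $G$-action on its Cayley graph, i.e.\ constants $\delta,L,N$ (depending only on $d_0$) with $|G_{x,y,800\delta}|\leq N$ whenever $d(x,y)\geq L$ (Theorem~\ref{thm:acylindrical}). Pure loxodromicity is free, since $G$ is a.a.s.\ torsion-free. This is exactly the input to Theorem~\ref{thm:acylind-quotient}, which yields $n_0=n_0(\delta,L,N)$.

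Second, your account of where $\bestboundshort$ comes from is off. It does not arise from a first-moment count of ``bad'' relator tuples; the isoperimetric inequality (Theorem~\ref{thm:isop-reductions2}) holds for all $d<\tfrac12$. The threshold instead appears in the colliding-geodesics argument (Proposition~\ref{prop:key2}): one glues $\binom{k+1}{2}$ bigon diagrams along $k+1$ pairwise disjoint translates of a geodesic into a non-planar labelled $2$--complex $Y$, and compares an upper bound for $\Red(Y)$ (coming from the $E_1$-edges, Lemma~\ref{lem:e1}) with a lower bound (coming from the gluing, Lemma~\ref{lem:lowerred}). The resulting inequality $\tfrac{4(3d'-1)}{3(1-2d')}<2-3d'$ is the quadratic whose root is $\bestboundshort$.

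Third, your restriction to odd $n$ is unnecessary: the paper combines \cite{Coulon-Gruber} with \cite{Coulon-even} to cover all sufficiently large $n$.
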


At densities $d<\frac13$, random triangular groups are free (see \cite[Theorem 1]{ALS-15-random-triangular-at-third} and discussion), thus random triangular $n$-periodic groups at such densities are just free Burnside groups (of a lower rank), so Theorem~\ref{thm:main} is only interesting at densities $\geq \frac{1}{3}$.

At densities $> \frac{1}{3}$, random triangular groups have interesting fixed point properties: they have Kazhdan's property (T) a.a.s.~\cite{Zuk-03-prop-T,KK-11-zuk-revisited}, and indeed for each $p \in (1,\infty)$ they have property $FL^p$, that is any affine isometric action of the group on an $L^p$-space has a global fixed point~\cite{DM-16-FLp}; see these references for further details.
While these fixed-point properties are trivial for finite groups, they are highly interesting for \emph{infinite} groups.
Clearly, whenever a random triangular group at density $d$ has $\cP$ a.a.s.\ and $\cP$ is inherited by quotients, then a random triangular $n$-periodic group at density $d$ also has $\cP$. 
 We deduce, for example:
\begin{corollary}
	At densities $d$ with $\frac{1}{3} < d < \bestboundx$, for $n$ large enough, a random triangular $n$-periodic group at density $d$ is infinite and has Kazhdan's property (T) a.a.s., and for each fixed $p_0 \in (1, \infty)$, a random triangular $n$-periodic group at density $d$ is infinite and has property $FL^p$ for all $p \in (1,p_0]$ a.a.s.
\end{corollary}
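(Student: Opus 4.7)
The plan is to deduce the Corollary by combining Theorem~\ref{thm:main} with the cited fixed-point results for random triangular groups at density greater than $1/3$, via the quotient-closure observation in the paragraph preceding the statement.

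First I would set up a coupling. Fix $m$ and use the same random tuple $R$ of $\lfloor(2m-1)^{3d}\rfloor$ cyclically reduced length-$3$ words to form both the random triangular group $G'_m = F_m/\langle\!\langle R\rangle\!\rangle$ and the random triangular $n$-periodic group $G_m = B(m,n)/\langle\!\langle R\rangle\!\rangle$. Since $B(m,n)$ is a quotient of $F_m$, the group $G_m$ is a quotient of $G'_m$. Both property (T) and property $FL^p$ are inherited by quotients: an affine isometric action of the quotient on an $L^p$-space pulls back to an action of the ambient group with the same fixed-point set, and the image of a global fixed point is a global fixed point. Hence whenever $G'_m$ a.a.s.\ has (T), so does $G_m$, and similarly for $FL^p$.

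Next, by \cite{Zuk-03-prop-T,KK-11-zuk-revisited}, at density $d > 1/3$, $G'_m$ a.a.s.\ has (T); by \cite{DM-16-FLp}, for each fixed $p \in (1,\infty)$, $G'_m$ a.a.s.\ has $FL^p$. For $d \in (1/3, \bestboundshort)$ and $n$ large enough (as supplied by Theorem~\ref{thm:main}), intersecting the (T) and infiniteness events---a finite intersection of a.a.s.\ events---yields the first half of the Corollary. For the $FL^p$ half, the same argument handles the statement up to the uniformity-in-$p$ issue discussed next.

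The only non-routine point is the uniformity in $p \in (1, p_0]$ required to pass from the per-$p$ statement of \cite{DM-16-FLp} to the simultaneous-in-$p$ statement in the corollary. I expect this to follow because the argument in \cite{DM-16-FLp} produces an a.a.s.\ spectral-gap lower bound on the random links which implies $FL^p$ throughout any bounded sub-interval of $(1, \infty)$ at once, so that uniformity over $p \in (1, p_0]$ is already available from the same proof. If only the per-$p$ statement is formally asserted there, one would extract the $p$-uniformity of the required density threshold on each compact sub-interval $[1+\varepsilon, p_0]$ directly from the proof and then take $\varepsilon \to 0$ along a countable sequence. This uniformity is the main---and essentially only---obstacle to the Corollary.
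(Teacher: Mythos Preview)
Your argument is correct and matches the paper's reasoning, which consists only of the paragraph preceding the corollary: the random triangular $n$-periodic group is a quotient of the random triangular group on the same random relators, properties (T) and $FL^p$ pass to quotients, and Theorem~\ref{thm:main} supplies infiniteness. Your uniformity-in-$p$ concern is not actually an obstacle, since the cited result \cite{DM-16-FLp} already establishes $FL^p$ for all $p\in(1,p_0]$ simultaneously from a single spectral-gap estimate, so no additional extraction is needed.
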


In view of the recent result that, if $n$ is large enough and not a prime, then free Burnside groups of exponent $n$ and rank at least 2 do not have property (T) \cite{Osajda-cubization}, this implies that for large enough composite $n$ a random triangular $n$-periodic group at these densities is not isomorphic to a free Burnside group a.a.s.

\medskip

The key tool for proving Theorem~\ref{thm:main} is the following result. It says that a group acting non-elementarily, purely loxodromically, and acylindrically on a $\delta$-hyperbolic space admits an infinite $n$-periodic quotient, so long as $n$ is large enough, where ``large enough'' only depends on $\delta$ and the acylindricity constants of the action. An action is purely loxodromic if every non-trivial group element acts as a loxodromic.

\begin{theorem}[{\cite[Proposition 4.1]{Coulon-Gruber} and \cite[Theorem 5.7]{Coulon-even}}]\label{thm:acylind-quotient}
	Suppose that a group $G$ acts purely loxodromically and acylindrically on a $\delta$-hyperbolic space $X$ with constants $L$ and $N$, i.e.\ for any $x,y \in X$ with $d(x,y) \geq L$ we have for 
	\[ G_{x,y,800\delta} := \{ g \in G : d(gx,x) \leq 800\delta, d(gy,y)\leq 800\delta\} \]
	that $|G_{x,y,800\delta}| \leq N$.
	Then there exists $n_0=n_0(\delta,L,N)$ so for any $n\geq n_0$ the group $G / G^n$ is infinite.
\end{theorem}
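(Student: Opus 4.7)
My plan is to realize $G/G^n$ as an inductive limit of small cancellation quotients of $G$ in the sense of Delzant--Gromov, extended to the acylindrical setting by Coulon. The guiding idea is this: rather than try to impose the relation $g^n=1$ simultaneously for every $g\in G$, one kills, at each stage $k$, all $n$-th powers of loxodromic elements whose translation length is bounded above by some threshold $\ell_k\to\infty$, and then shows that what survives is a sensible hyperbolic action whose group of deck transformations is large.

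The basic building block is a \emph{hyperbolic cone-off / small cancellation theorem} for acylindrical actions, which I would formulate roughly as follows. Given an action of $H$ on a $\delta'$-hyperbolic space $X'$ that is acylindrical with constants $(L',N')$, and given a family $\{H_i\}$ of cyclic loxodromic subgroups with pairwise large fellow-travel constants (this is the ``small cancellation'' hypothesis, verified using purely loxodromicity and the acylindricity bound $N'$), the cone-off space $\dot X'$ obtained by gluing hyperbolic cones of radius $\asymp n\cdot\ell(g_i)$ onto the axes of generators $g_i$ is again hyperbolic, with controlled constant $\dot\delta'$, and $H/\langle\langle g_i^n\rangle\rangle$ acts on $\dot X'/\sim$ acylindrically with controlled constants $(\dot L',\dot N')$. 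Crucially one needs that if $n\cdot\min_i\ell(g_i)$ is large enough compared to $\delta'$, then $\dot\delta'\lesssim \delta'$ and $\dot L',\dot N'$ depend only on $\delta',L',N'$ (not on the family chosen). This is the content of Coulon's machinery, ultimately relying on Cartan--Hadamard-type arguments for hyperbolic cones.

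With this tool in hand, I would iterate. Starting from $(G_0,X_0,\delta_0,L_0,N_0):=(G,X,\delta,L,N)$, at step $k$ I pick a maximal family of pairwise non-conjugate cyclic loxodromic subgroups of $G_k$ whose generators have translation length at most some $\ell_k$, verify the small cancellation hypothesis using purely loxodromicity and the acylindricity bound (so that elements with short translation distance at two far-apart points are rare), and apply the cone-off theorem to produce $(G_{k+1},X_{k+1},\delta_{k+1},L_{k+1},N_{k+1})$ with $G_{k+1}=G_k/\langle\langle g^n: g\in\text{family}\rangle\rangle$. Choosing $n_0$ large enough once and for all (depending only on $\delta,L,N$), the constants $(\delta_k,L_k,N_k)$ can be made to stay uniformly bounded; meanwhile $\ell_k\to\infty$ ensures that every element of $G$ which becomes of finite order in $G/G^n$ has its $n$-th power killed at some finite stage. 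Hence $G/G^n=\varinjlim G_k$ acts acylindrically on the (Gromov--Hausdorff or direct) limit of the $X_k$, which is still hyperbolic, and this action has loxodromic elements (the high-translation-length elements survive at every stage), so $G/G^n$ is infinite.

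The main obstacle is the uniform control in the inductive step: one has to show that the cone-off construction, when applied to a whole invariant family of loxodromic axes, produces constants $(\delta_{k+1},L_{k+1},N_{k+1})$ that do not blow up, and in fact can be taken equal to the original $(\delta,L,N)$ up to a universal multiplicative constant. This is where the hypothesis ``$n\geq n_0(\delta,L,N)$'' is really used: it provides enough ``rotation'' in the hyperbolic cones that the local-to-global hyperbolicity and acylindricity arguments go through with constants independent of $k$. Purely loxodromicity is needed here too, because it forbids finite-order stabilizers from accumulating in the cone-off and ruining acylindricity. Once this uniform small cancellation step is established, the inductive argument and passage to the limit are essentially formal.
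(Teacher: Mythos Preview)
The paper does not prove this theorem at all: it is quoted as a black box from \cite{Coulon-Gruber} and \cite{Coulon-even}, with only a brief remark explaining the split into the odd-divisor and large-power-of-$2$ cases and the reason for the constant $800\delta$ (a factor of $8$ coming from converting between the slim-triangles and $4$-point definitions of hyperbolicity). So there is no ``paper's own proof'' to compare against.

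That said, your sketch is a fair high-level description of the strategy actually used in those references, namely the Delzant--Gromov/Coulon machinery of iterated hyperbolic cone-off and partial periodic quotients, with uniform control of the hyperbolicity and acylindricity constants at each stage. Two small caveats. First, in Coulon's argument one does not typically pass to a genuine limit space and argue that $G/G^n$ acts acylindrically on it; rather, one shows at each finite stage that $G_k$ still acts non-elementarily on a hyperbolic space (so is infinite), and since $G/G^n=\varinjlim G_k$ is a direct limit of quotient maps with the required $n$-th powers eventually killed, infiniteness follows. Second, your line ``choosing $n_0$ large enough once and for all'' hides the serious content: establishing that the constants $(\delta_k,L_k,N_k)$ can be kept uniformly bounded under cone-off is the bulk of Coulon's work, and in the even case requires additional care (hence the separate reference \cite{Coulon-even}). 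As a summary of the strategy your sketch is accurate, but it is not a proof, and the paper makes no attempt to supply one either.
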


In the case that $n$ has a large enough odd divisor, the above theorem is based on \cite[Theorem~6.15]{Coulon-partial}, and our statement is a slight simplification of the statement given in \cite{Coulon-Gruber} that will be sufficient for our purposes. In the case $n$ is divided by a large enough power of $2$, it follows from \cite[Theorem 5.7]{Coulon-even}. We state the result for $800\delta$ instead of $100\delta$ as in \cite{Coulon-Gruber}, as \cite{Coulon-Gruber} and \cite{Coulon-even} use the 4-point definition of hyperbolicity, while we use the slim-triangles definition, which incurs a conversion factor of 8 \cite[Chapitre 1, Proposition 3.6]{Coornaert-Delzant-Papadopoulos}.
The geometric ideas behind behind Theorem~\ref{thm:acylind-quotient} were introduced by Delzant and Gromov~\cite{Delz-Gro-08-mesoscopic}; for a fuller history we refer to the discussion in~\cite{Coulon-even}.

\medskip

In our proof, we shall use the action of a triangular random group $G$ on its Cayley graph $X$ to show that a triangular random $n$-periodic group, which can be realized as $G/G^n$, is infinite. For suitable densities $d$, $X$ is $\delta$-hyperbolic, where $\delta$ only depends on $d$, i.e.\ is independent of the rank $m$. In order to apply Theorem~\ref{thm:acylind-quotient}, it remains to also bound $L$ and $N$ independently of $m$. 
The key challenge in this is to bound $N$ independent of the volume of balls in $G$, because as $m \ra \infty$ this will not be controlled.  The main idea of the proof is to use strong isoperimetric inequalities for random groups to show that, with at most uniformly boundedly many exceptions, geodesics $[x,y]$ and $[gx,gy]=g[x,y]$ have to collide within a uniformly bounded distance of $x$. 

\medskip

In the following we show that a triangular random group at density $d_0<\bestboundx$ acts acylindrically on its $\delta$-hyperbolic Cayley graph, with acylindricity constants only depending on $d_0$.

\begin{theorem}\label{thm:acylindrical}
 Let $d_0<\bestboundx$. Then there exist $\delta=\delta(d_0)$, $L=L(d_0)$ and $N=N(d_0)$ such that for any $d\leq d_0$, for a random triangular group $G$ at density $d$ a.a.s.\ the Cayley graph $X$ is $\delta$-hyperbolic, and for every $x,y\in X$ with $d(x,y)\geq L$ we have $|G_{x,y,800\delta}|\leq N$.
\end{theorem}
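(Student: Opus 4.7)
The plan has three stages: establishing uniform hyperbolicity (standard input), reducing acylindricity to a geometric ``collision'' statement for translated geodesics, and verifying that statement via a random-group combinatorial argument.

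For hyperbolicity, I would cite known results: at densities $d \leq d_0 < 1/2$, random triangular groups satisfy a linear Gromov-style isoperimetric inequality with constants depending only on $d_0$, yielding $\delta$-hyperbolicity of the Cayley graph with $\delta = \delta(d_0)$ independent of the rank $m$.

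For the main step, I would fix $x, y \in X$ with $L' := d(x,y) \geq L$, let $\alpha$ be a geodesic from $x$ to $y$, and for each $g \in G_{x,y,800\delta}$ set $\beta := g\alpha$, a geodesic from $gx$ to $gy$. Joining endpoints with geodesics of length $\leq 800\delta$ produces a quadrilateral $Q_g$ bounding a minimal van Kampen diagram $D_g$ of boundary length $2L' + O(\delta)$ and area at most $C(d_0) L'$ by isoperimetry. The key claim I would aim to prove is: at density $d_0 < \bestboundx$, for all but $K = K(d_0)$ elements $g \in G_{x,y,800\delta}$, the translated geodesic $\beta$ meets $\alpha$ at a vertex within distance $R = R(d_0)$ of $x$. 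My strategy is a dichotomy on $D_g$: either $\alpha$ and $\beta$ already share a vertex near an endpoint, or $D_g$ must contain a long ``thin'' chain of relator faces of a specific combinatorial type whose expected number in a random presentation vanishes by a union bound, provided the density is below $\bestboundx$. The specific numerical threshold will arise from optimizing the trade-off between the $(2m-1)^{3d}$ randomly chosen length-$3$ relators and the count of such forbidden configurations of faces in $D_g$.

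Given the collision statement, counting becomes elementary: if $\alpha(i) = g\alpha(j)$ with $i \leq R$, then $g = \alpha(i) \alpha(j)^{-1}$ as a group element, using freeness of the $G$-action on $X$ and identifying $x$ with the identity. Hyperbolic fellow-traveling of $\alpha$ and $\beta$ gives $|i - j| = O(\delta)$, so $j \leq R + O(\delta)$, leaving at most $(R + O(\delta))^2$ possible pairs $(i,j)$ and hence at most $K + (R + O(\delta))^2 =: N(d_0)$ elements $g$. The main obstacle will be the collision statement itself: extracting the correct random-group-theoretic dichotomy at the precise density threshold $\bestboundx$, which governs how thin the minimal diagram $D_g$ can be forced to be before a forbidden face chain must appear.
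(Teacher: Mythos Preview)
Your overall plan---uniform hyperbolicity, a collision statement for translated geodesics, and a counting step---matches the paper's architecture, and your counting step is essentially the same as theirs.  The gap is in the collision statement and how you propose to prove it.

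You write that either $\alpha$ and $g\alpha$ share a vertex near an endpoint, or the single diagram $D_g$ contains a forbidden ``thin chain'' of faces whose expected number vanishes by a union bound.  This dichotomy fails.  At densities $d>1/3$ a random triangular presentation a.a.s.\ contains a relator of the form $ab^2$, and then the geodesics $\gamma=(a^i)$ and $b^{-1}\gamma$ are parallel for their entire length; the resulting bigon diagram is a long thin strip that is perfectly reduced and contains nothing forbidden.  So a single non-colliding pair $(\alpha,g\alpha)$ imposes no constraint you can rule out by a union bound over the presentation, and looking at each $D_g$ in isolation cannot give any bound on the number of exceptional $g$.

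What the paper does instead is crucial and missing from your outline.  Assuming $k+1$ elements $g_1,\ldots,g_{k+1}$ with \emph{pairwise} disjoint translates $g_i\gamma$, they form all $\binom{k+1}{2}$ reduced diagrams $D_{i,j}$ and glue them along the common geodesic arcs into a single non-planar labelled $2$--complex $Y$.  The contradiction then comes from a generalised isoperimetric inequality $\Cancel(Y)-\Red(Y)\leq 3(d+\epsilon)|Y|$ valid for such complexes (not just planar van Kampen diagrams), together with upper and lower estimates for $\Red(Y)$.  The upper bound on $\Red(Y)$ uses that any ``reduction pair'' across a glued geodesic edge would force two of the translates to meet; the lower bound comes from the gluing itself, which raises edge degrees by $k-1$ along each geodesic.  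Balancing these for large $k$ and $L$ is exactly where the threshold $\frac{11}{12}-\frac{\sqrt{41}}{12}$ appears.  Your single-diagram approach never sees the interaction between different $g_i$'s that drives the argument.
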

The claim on $\delta$-hyperbolicity of $X$ is well-known, see Theorem~\ref{thm:isop2}, and stated here to emphasise the dependence on $d_0$. Furthermore, a.a.s.\ $G$ is infinite, torsion-free, and not isomorphic to $\Z$ by \cite[V.d.]{Oll-05-rand-grp-survey}, whence the action on $X$ is non-elementary and purely loxodromic, and we can deduce Theorem~\ref{thm:main} from Theorem~\ref{thm:acylindrical} using Theorem~\ref{thm:acylind-quotient}.
In the remainder of the paper, we will prove Theorem~\ref{thm:acylindrical}.

\medskip

The bound $d_0 < \bestboundx$ results from our current methods, which use that for $d$ not too much larger than $\frac13$, van Kampen diagrams bounded by two geodesics have particularly nice forms.  There is no obvious reason why more elaborate arguments could not work at higher densities, so we ask:
\begin{question}
	For each $d_0 < \frac{1}{2}$, does there exist $n_0$ so that for each $n\geq n_0$ and for each $0 < d \leq d_0$, a random triangular $n$-periodic group $G$ at density $d$ is infinite?
\end{question}

\begin{remark}
 In the first version of this paper, we only considered \emph{odd} exponents $n$. This case is covered by \cite[Proposition 4.1]{Coulon-Gruber}. Since then,  \cite[Theorem 5.7]{Coulon-even} appeared, which together with \cite[Proposition 4.1]{Coulon-Gruber} covers the case of even exponents $n$. Therefore, Theorem~\ref{thm:acylind-quotient} holds without restrictions on the parity of $n$, whence so does Theorem~\ref{thm:main}.
\end{remark}

\subsection*{Acknowledgements}
We thank R\'emi Coulon and an anonymous referee for helpful correspondence.

\section{Isoperimetric inequalities}

	Ollivier established an isoperimetric inequality for random groups that is a key tool in our argument, see Theorem~\ref{thm:isop2} below.
	We are going to need a version of this inequality which applies to not-necessarily-planar $2$--complexes, which are not necessarily reduced.  (For a different generalisation to non-planar diagrams, see Odrzyg\'o\'zd\'z~\cite[Theorem 1.5]{Od-14-nonplanar}.)

	Let us recall the language of combinatorial complexes and van Kampen diagrams~\cite[I.8A]{BH-99-Metric-spaces}, \cite{McC-Wise-02-Fans-ladders}.
	A map $Y\to K$ between CW-complexes is \emph{combinatorial} if it maps each open cell of $Y$ homeomorphically onto an open cell of $X$; a CW-complex is \emph{combinatorial} if for suitable subdivisions the attaching maps of cells are all combinatorial. We shall use the expressions ``$1$--cell'' and ``edge'' interchangeably.
	
	Since our random model constructs presentations by independently sampling random words, it will be convenient to think of tuples of relators, rather than sets of relators. To that end, throughout the rest of the paper, consider a group presentation as a pair $\langle S\mid R\rangle$ where $R$ is a tuple of elements of $F(S)$. As usual, this defines the group $F(S)/N$, where $N$ is the normal closure of the set of elements of $R$.

	Having fixed a group presentation $\langle S \mid R \rangle$ for $G$, we let $K=K(G;S,R)$ be the standard combinatorial $2$--complex for this presentation, with one $0$--cell, a $1$--cell  for each generator, and a $2$--cell for each relation.
	A \emph{van Kampen diagram $D$} is a non-empty, contractible, finite combinatorial $2$--complex $D$ embedded in the plane together with a combinatorial map $D\to K$.
	(The data of the map $D \to K$ is equivalently given by a labelling of $1$--cells by generators so that the attaching map of each $2$--cells is labelled by a relation and, in the case there are multiple $2$--cells with the same boundary cycles, choices among these $2$--cells.)
	Let $\overline K$ be the complex obtained from $K$ by identifying $2$--cells with the same boundary cycles. The van Kampen diagram $D$ is \emph{reduced} if the composition $D \to K\to \overline K$ is an immersion around open $1$--cells.

	Let $Y$ be a combinatorial $2$--complex, and denote by $Y^{(i)}$ the set of $i$--cells of $Y$.
	Let $|Y| = |Y^{(2)}|$ be the number of $2$--cells of $Y$.
 In a slight variation of \cite{MacPrz-15-balanced-walls}, the \emph{cancellation} of $Y$ is defined as
\[
	\Cancel(Y) = \sum_{e \in Y^{(1)}} (\deg(e)-1)_+,
\]
where $\deg(e)$ is the number of times $e$ appears as the image of an edge of the attaching map of a $2$--cell of $Y$, and $(\cdot)_+ = \max\{\cdot,0\}$.
If $D$ is a van Kampen diagram, then $\Cancel(D)$ is the number of internal edges of $D$, and so Ollivier's isoperimetric inequality can be phrased as:
\begin{theorem}[{\cite[Theorem 2, Corollary 3]{Oll-07-sc-rand-group}, \cite[Theorem 2.2]{MacPrz-15-balanced-walls}}]\label{thm:isop2}
	For $0<d<\frac12$ and any $\epsilon>0$ a.a.s.\ every reduced van Kampen diagram $D$ in a random triangular group $G$ at density $d$ satisfies
	\[
		\Cancel(D) \leq (d+\epsilon)|D|3.
	\]
	Equivalently, every such $D$ has $|\partial D| \geq 3(1-2d-2\epsilon)|D|$, where $|\partial D|$ is the number of boundary edges.

	Consequently, the Cayley graph of $G$ is $\delta$-hyperbolic with $\delta\leq 12/(1-2d)$.
\end{theorem}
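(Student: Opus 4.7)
The plan is a first-moment argument: for each $k$, bound the expected number of reduced van Kampen diagrams $D$ with $|D| = k$ and $\Cancel(D) > 3(d+\epsilon)k$, and show these sums tend to zero as $m\to\infty$. The equivalent boundary formulation follows from the planar identity $|\partial D| = 3|D| - 2\Cancel(D)$, since every interior edge is shared by exactly two faces and contributes one to the cancellation, while boundary edges contribute nothing.

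To bound the expected count, I would decompose the datum of $D$ into a combinatorial shape, a labeling of its $3k - \Cancel(D)$ edges by elements of $S\cup S^{-1}$, and an assignment of each face's boundary cycle to one of the chosen relators. A standard sequential-face-attachment enumeration gives at most $C_0^k$ isomorphism classes of shapes with $k$ triangular faces. For a fixed shape, there are at most $(2m)^{3k-c}$ labelings where $c = \Cancel(D)$. Since the $\lfloor (2m-1)^{3d}\rfloor$ relators are chosen uniformly from the $\sim (2m-1)^3$ cyclically reduced length-$3$ words, for a fixed labeling the probability that every face's boundary coincides up to cyclic conjugation with one of the relators is at most $(2m-1)^{(3d-3)k}$, up to a bounded combinatorial factor coming from cyclic conjugation and the order of the relator list. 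Multiplying yields expected count at most $C_1^k (2m-1)^{3dk-c}$, which when $c > 3(d+\epsilon)k$ is at most $C_1^k(2m-1)^{-3\epsilon k}$; this sums over $k$ to $o(1)$ as soon as $(2m-1)^{3\epsilon} > 2C_1$, so a union bound gives the a.a.s.\ conclusion.

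The hyperbolicity of the Cayley graph then follows from the resulting linear isoperimetric inequality via the classical fact that linear Dehn functions characterize hyperbolic groups; the explicit constant $\delta \leq 12/(1-2d)$ comes from a direct geodesic-bigon surgery estimate combined with the cancellation bound. The main obstacle is careful bookkeeping in the shape and label counts to avoid overcounting. The reduction hypothesis is essential here: without it, two adjacent faces could carry mutually inverse relators and cancel, which would make the bound $(2m)^{3k-c}$ on labelings substantially loose and destroy the probabilistic estimate. Ordering the faces and attaching them one at a time, so that each new face's probability of fitting contributes $(2m-1)^{3d-e_i}$ where $e_i$ is the number of new edges it introduces, makes the product telescope cleanly into the final bound and sidesteps the need to enumerate shapes explicitly.
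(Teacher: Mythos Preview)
Your first-moment strategy is correct and is indeed Ollivier's approach; the paper does not give its own proof of this theorem but simply cites Ollivier, though its proof of the generalisation Theorem~\ref{thm:isop-reductions2} reproduces the relevant argument and can be compared. However, there is a genuine gap in your probability estimate. The bound $(2m-1)^{(3d-3)k}$ for the probability that a fixed edge-labelling yields a valid diagram tacitly assumes the $k$ faces carry $k$ distinct boundary words, but reducedness does not guarantee this: it only forbids two faces from sharing an edge at the \emph{same} cyclic position in both, so many faces may bear the same relator $r$ attached at different positions. If only $n<k$ distinct words occur among the faces, the probability you need is of order $(2m-1)^{(3d-3)n}$, which exceeds your claimed bound by a genuine power of $m$, not a ``bounded combinatorial factor''. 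Your face-by-face sequential version has the same defect: once a later face's boundary word is forced to coincide with an earlier one, it costs nothing further in probability, so the per-face factor $(2m-1)^{3d-e_i}$ is not valid there.

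Ollivier's remedy, visible in the paper's proof of Theorem~\ref{thm:isop-reductions2}, is to fold the ``which faces share a relator'' information into the abstract shape via a surjection $\pi$ from the set of faces onto $\{1,\dots,n\}$, order the $n$ abstract relators by decreasing multiplicity $m_1\ge\cdots\ge m_n$, and reveal the $n$ \emph{relators} (not the $k$ faces) one at a time. The saving at step $i$ is $(2m-1)^{-\delta_i}$ where $\delta_i$ counts edges forced when relator $i$ is written on all its faces, and the combinatorial inequality $\sum_i m_i\delta_i \ge \Cancel(D)$---which holds precisely because $D$ is reduced---is what makes the union bound close. Your sketch correctly flags reducedness as essential, but this is the real reason rather than the one you give about the labelling count.
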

\begin{remark}
	Ollivier's original statement was for the Gromov density model where the lengths of relations grow, but the proof works essentially verbatim in the triangular model too, compare also \cite[Lemma 7]{ALS-15-random-triangular-at-third}.
\end{remark}

We now describe a generalisation of van Kampen diagrams suitable for our purposes.
For a random triangular presentation at densities $d<1/2$ a.a.s.\ the relators are all distinct even after taking cyclic conjugates or inverses, and there are no proper powers; however, to avoid conditioning on this event later, the following definition allows for such occurances.
\begin{definition}\label{def:labelled-cplx}
	Suppose $G = \langle S \mid R\rangle$ is a group presentation, where $R$ is a tuple of relators $R=(r_i)_{i\in I}$, and $K=K(G\; S,R)$.
	A \emph{labelled $2$--complex} $Y$ is a combinatorial $2$--complex $Y$ with a combinatorial map $Y\to K$.
\end{definition}
This map $Y\to K$ encodes a lot of combinatorial data which we describe with the following notation.
\begin{notation}\label{def:labelled-cplx-extra}
	Suppose we are in the situation of Definition~\ref{def:labelled-cplx}.
	Let $\pi:K^{(2)}\to I$ be the map identifying each $f\in K^{(2)}$ with the corresponding index $i \in I$ such that each $f$ has a boundary path with label $r_{\pi(f)}$  (i.e.\ each $2$--cell remembers its position in $R$).
	
	For each $2$-cell $f \in Y^{(2)}$, let $\partial f$ be the choice of boundary path (expressed as a tuple of oriented $1$--cells) such that, if $\pi(f)=i$, then the path $\partial f$ bears the word $r_i$. Here we also denote by $\pi$ the map $Y^{(2)}\to K^{(2)}\to I$, and we consider edges of $Y$ with the orientations and labels by $S$ inherited from $K$. 
\end{notation}
We also need a way to measure how (un)reduced a labelled $2$--complex is.
\begin{definition}\label{def:red}
	Suppose $Y$ is a labelled $2$--complex.
	Suppose we have $e \in Y^{(1)}$ and $f \in Y^{(2)}$ with boundary $\partial f$ labelled $e_1,e_2,\ldots,e_n$, and that $f$ \emph{contains} $e$, i.e.\ some $e_k$ is an oriented version of $e$.
	We say the \emph{least position of $e$ in $f$} is the least positive natural number $k$ such that $e_k$ is an oriented version of $e$.

	Given $e \in Y^{(1)}$ and $f \in Y^{(2)}$
	we set $\xi(e,f)=1$ if $f$ contains $e$ and, for any $2$--cell $f'$ with $\pi(f')=\pi(f)$ that contains $e$, the least position of $e$ in $f$ is less than or equal to the least position of $e$ in $f'$.  Otherwise, set $\xi(e,f)=0$.
Let
	\[
		\Red(Y) = \sum_{e \in Y^{(1)}}
		\sum_{i\in I}\bigl(\sum_{f\in Y^{(2)},\pi(f)=i
		} \xi(e,f)-1\bigr)_+.
	\]
\end{definition}

For example, if an edge $e$ has six $2$--cells with label $i\in I$ attached to it, and in four of these $2$--cells it is attached as the second edge and in two it is the third edge, then $e$ and $i$ give a contribution of $4-1=3$ to $\Red(Y)$.  
This number $\Red(Y)$ is relevant in our probabilistic arguments later which require estimates on how a relator interacts with itself in a diagram.

Note too that any van Kampen diagram $D$ over $\langle S\mid R\rangle$ gives rise to a labelled $2$--complex. In the case $R$ contains proper powers and/or multiple copies of words that are cyclically conjugate or cyclically conjugate up to inversion, this involves choices. However, if $D$ is reduced then, for any choice, we have $\Red(D)=0$, because for each $1$--cell $e$ there exists at most two $2$--cells containing $e$, and not both can bear the same relator and have $e$ at the same position.

As mentioned before, for a random triangular presentation at densities $d<1/2$ a.a.s.\ $R$ does not contain proper powers and all words in $R$ are distinct even after taking cyclic conjugates or inverses.  In this case, a van Kampen diagram $D$ is reduced if and only if $\Red(D)=0$.  The definition of $\Red(D)$ above allows us to state and prove results avoiding conditioning on $R$ satisfying this property.

We now generalise Ollivier's Theorem~\ref{thm:isop2}.
\begin{theorem}\label{thm:isop-reductions2}
	For any $0<d<\frac12$, $M \in \N$, $\epsilon >0$, a.a.s.\ in a random triangular group $G$ at density $d$ every labelled $2$--complex $Y$ with at most $M$ $2$--cells satisfies
	\[
		\Cancel(Y)-\Red(Y) \leq (d+\epsilon)|Y|3.
	\]
\end{theorem}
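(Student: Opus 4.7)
My plan is to use the first-moment method, adapting Ollivier's proof of Theorem~\ref{thm:isop2} to the more general setting of labelled $2$--complexes which may be non-planar and non-reduced. It suffices to show that $\bE[\#\text{bad }Y]\to 0$ as $m\to\infty$, where $Y$ is called bad if $|Y|\le M$ and $\Cancel(Y)-\Red(Y)>(d+\eps)\cdot 3|Y|$; then Markov's inequality yields the a.a.s.\ conclusion.

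I would stratify the first-moment computation by (i) the combinatorial isomorphism type of the underlying $2$--complex, of which there are boundedly many (depending only on $M$) with at most $M$ triangular $2$--cells; (ii) the index function $\pi:Y^{(2)}\to I$ with $k$ distinct values, inducing a partition of $Y^{(2)}$ into classes $G_1,\dots,G_k$; and (iii) the edge labels. The number of ways to choose $k$ distinct indices from $I$ is at most $L^k\le(2m-1)^{3dk}$, and the probability that the random relators $r_{j_1},\dots,r_{j_k}$ each equal, as cyclic words, the common cyclic boundary word of the corresponding group is at most $(C_1/(2m-1)^3)^k$, uniformly in the specific words required. For the remaining combinatorial count of consistent edge labellings I would parametrize by choosing, for each group $s$, a cyclically reduced length-$3$ word $w_s$ together with an alignment of each cell's boundary path to a cyclic shift of $w_s$, and then study the constraints on $(w_s)$ imposed at edges shared by several cells. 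The crucial observation is that when two cells in the same group contain a shared edge at identical positions of their boundary paths, the equality of the two induced labels is automatic and imposes no new constraint on $w_s$; the definition of $\Red(Y)$ via the $\xi(e,f)$ indicators is designed to record exactly these ``free'' coincident-position incidences at the minimum position within each group.

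Granting the combinatorial bound $O\bigl((2m-1)^{3k-(\Cancel(Y)-\Red(Y))}\bigr)$ on the number of consistent $(w_s,\text{alignments})$-tuples, the expected count of bad $Y$ of a given type and partition is at most
\[ O\bigl((2m-1)^{3dk-(\Cancel(Y)-\Red(Y))}\bigr). \]
For a bad $Y$, using $k\le|Y|$ we have $\Cancel-\Red>(d+\eps)\cdot 3|Y|\ge(d+\eps)\cdot 3k$, so the exponent is at most $-3\eps k\le-3\eps$. Summing over the finitely many combinatorial types, partitions, and values of $k$ gives $\bE[\#\text{bad }Y]\to 0$, as required.

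The main obstacle is verifying the combinatorial estimate that the number of consistent $(w_s,\text{alignments})$-tuples is $O\bigl((2m-1)^{3k-(\Cancel-\Red)}\bigr)$. I expect this to require an inductive bookkeeping in which one processes cells in a chosen order and, at each shared-edge incidence, shows that a new equality constraint of probability $\sim 1/(2m-1)$ is imposed on the cyclic words unless the incidence repeats a previously-seen same-group same-position configuration, in which case the constraint is automatic. Checking that the least-position tie-breaking built into $\xi(e,f)$ causes these free coincidences to be counted exactly by $\Red(Y)$---and that this remains tight even in the presence of relator self-intersections and cells traversing an edge more than once---is the delicate combinatorial heart of the argument.
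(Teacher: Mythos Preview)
Your overall framework---first moment over finitely many abstract types, stratifying by the underlying CW-complex and by the partition induced by $\pi$---matches the paper's exactly. The gap is the combinatorial bound you isolate as the ``main obstacle'': the number of consistent $k$-tuples of words is \emph{not} in general $O\bigl((2m-1)^{3k-(\Cancel(Y)-\Red(Y))}\bigr)$, because $\Red(Y)$ records only the free same-label coincidences occurring at the \emph{minimum} position along an edge; free coincidences at non-minimal positions are not credited.

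A concrete counterexample: take three triangular $2$--cells $f_1,f_2,f_3$, all with the same label ($k=1$), glued along a single edge $e$, with $e$ at position $2$ in $\partial f_1$ and at position $3$ in both $\partial f_2$ and $\partial f_3$; the remaining six edges are free. Then $\Cancel(Y)=(3-1)_+=2$, while the minimum position of $e$ is $2$, attained only by $f_1$, so $\xi(e,f_1)=1$, $\xi(e,f_2)=\xi(e,f_3)=0$, giving $\Red(Y)=(1-1)_+=0$ and $\Cancel-\Red=2$. Your bound predicts $\lesssim (2m-1)^{3-2}=2m-1$ consistent words, but the only genuine constraint on $w$ is that its second and third letters agree (the coincidence of $f_2$ and $f_3$, both at position $3$, is automatic), so there are $\asymp(2m-1)^2$ consistent words. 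Thus your first-moment estimate undercounts by a factor of $2m-1$; note that this $Y$ is ``bad'' for every $d<\tfrac{2}{9}$, so the issue cannot be sidestepped by restricting to bad complexes.

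The paper does not attempt a direct count. Instead it defines $\delta(f)=|f|-\sum_e\chi(e,f)\xi(e,f)$ (with $\chi$ selecting the cells of minimal label at $e$) and proves only the \emph{inequality} $\sum_f\delta(f)\ge\Cancel(Y)-\Red(Y)$. One then sets $\delta_i=\max_{\pi(f)=i}\delta(f)$, establishes $p_i/p_{i-1}\le(2m-1)^{-\delta_i}$ for the conditional fulfilment probabilities, and invokes Ollivier's Abel-summation trick: ordering the labels so that the multiplicities satisfy $m_1\ge\cdots\ge m_n$ and using $\sum_i m_i\delta_i\ge\sum_f\delta(f)\ge\Cancel-\Red$, one passes from a bound governed by the number $n$ of distinct relators to one with exponent $3d-(\Cancel-\Red)/|Y|$. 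The weighting by the $m_i$ is precisely what converts ``$n$ distinct relators'' into ``$|Y|$ cells'', and it is this step---not a sharper edge-by-edge count---that your proposal is missing.
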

In order to prove this theorem, we adapt the language of~\cite[Section V.b.]{Oll-05-rand-grp-survey}. 
\begin{definition}[cf.\ {\cite[Definition 57]{Oll-05-rand-grp-survey}}]\label{def:labelled-cplx-abstract}
	An \emph{abstract labelled $2$--complex} is a combinatorial $2$--complex $Y$ together with
	\begin{enumerate}
		\item a choice of $n \in \{1,\ldots,|Y|\}$ called the \emph{number of distinct relators in $Y$}, 
	  \item a surjective assignment $\pi: Y^{(2)} \to \{1,\ldots,n\}$, and
	  \item for each $f\in Y^{(2)}$ a choice of boundary path $\partial f$ (i.e.\ an expression as a tuple of oriented $1$--cells).
	\end{enumerate}
	Given $e \in Y^{(1)}$ and $f \in Y^{(2)}$, define $\xi(e,f)=1$ as in Definition~\ref{def:labelled-cplx} and, subsequently, define $\Red(Y)$ as in Definition~\ref{def:labelled-cplx}.
\end{definition}

Let $Y$ be an abstract labelled $2$--complex with $n$ distinct relators as above. Let $k\leq n$ and
consider a $k$-tuple $(w_1,w_2,\dots,w_k)$ of words.
For each $j \in\{1,2,\ldots,k\}$ assume that the lengths of the boundary
paths of $2$--cells $f \in \pi^{-1}(j)$ are all equal to $|w_j|$.
For each $j \in\{1,2,\ldots,k\}$, $f \in \pi^{-1}(j)$ and
$i\in\{1,2,\dots, |w_j|\}$, assign the $i$--th letter of $w_j$ to the
unique oriented edge $e$ appearing as the $i$--th edge in $\partial f$.
This produces, for each oriented edge, a (possibly empty) set of
assigned letters in $S \cup S^{-1}$. We call this assignment the partial
labelling of $Y$ by $(w_1,w_2,\dots,w_k)$. We say the partial labelling
is \emph{consistent} if, for every edge $e$, the set of assigned letters
has at most one element and if both $e$ and its reversed edge $e^{-1}$
have non-empty sets of assigned letters $\{s\}$ and $\{t\}$,
respectively, then $s=t^{-1}$.

We think of the above process as writing the words $(w_1,w_2,\dots,w_k)$
on the boundaries of their corresponding $2$--cells. If this can be done
without ambiguity, the process is consistent.

\begin{definition}\label{def:fulfil}
	Fix a (random group) presentation $\langle S \mid R \rangle$.
	Let $Y$ be an abstract labelled $2$--complex with $n \leq |Y|$ distinct relators.
	A $k$-tuple of words $(w_1,\ldots,w_k)$ in $R$, $k \leq n$ \emph{(partially) fulfills} $Y$ if the partial labelling of $Y$ by $(w_1,w_2,\dots,w_k)$ is consistent.  
\end{definition}

Note that the words $w_1,\ldots,w_n$ need not be pairwise distinct.

Let $R=(r_i)_{i\in I}$ and $K$ be as given in Definition~\ref{def:labelled-cplx}. For $n\in \N$, an \emph{injective $n$--sub-tuple of $R$} is an $n$-tuple $(w_1,\ldots,w_n)$ of elements of $R$ together with an injective indexing map $\iota:\{1,\dots,n\}\to I$ such that $w_k=r_{\iota(k)}$ for each $k$. Suppose that $Y$ is fulfilled by an injective $n$--sub-tuple $(w_1,\ldots,w_n)$ of $R$. After possibly choosing arbitrary images for edges not contained in any $2$--cells, this data gives us a combinatorial map $Y\to K$ as in Definition~\ref{def:labelled-cplx}. Observe from the definitions that, as $\iota$ is injective, we get the same value for $\Red(Y)$ on considering $Y$ as an abstract labelled $2$--complex or as the labelled $2$--complex resulting from this fulfillment.

\begin{proof}[Proof of Theorem~\ref{thm:isop-reductions2}]
 We follow the proof in \cite[Section V.b.]{Oll-05-rand-grp-survey}, with minor changes.  The key step is the following:

	\begin{lemma}\label{lem:claim}	 		Assume $Y$ is an abstract labelled $2$--complex such that $\Cancel(Y)-\Red(Y)> (d+\epsilon)|Y|3$ and $|Y|\leq M$. 
	Then the probability that for an $\lfloor(2m-1)^{3d}\rfloor$--tuple of random triangular relators there exists an injective $n$--sub-tuple of relators fulfilling $Y$ (where $n \leq |Y|$ is the number of distinct relators in $Y$) goes to $0$ as $m\to\infty$. 
\end{lemma}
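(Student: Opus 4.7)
The plan is to follow the strategy of Ollivier's original argument~\cite[Section V.b.]{Oll-05-rand-grp-survey}, namely bounding the expected number of injective $n$--sub-tuples of relators that fulfill $Y$ and showing it tends to zero. Since the quantities $\Cancel(Y)$, $\Red(Y)$ and the combinatorics of fulfillment depend only on the $2$--cell portion of $Y$, and $|Y|\leq M$ is bounded, the relevant combinatorial data varies over only finitely many possibilities. By a union bound, it suffices to fix such a $Y$ and show that the expected number of injective fulfilling sub-tuples tends to zero as $m\to\infty$.

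The technical core, and the main obstacle, is the counting estimate that the number of $n$--tuples $(w_1,\ldots,w_n)$ of cyclically reduced length-$3$ words in $F(S_m)$ that fulfill $Y$ is at most $C(Y)\cdot(2m-1)^{3n - \Cancel(Y) + \Red(Y)}$, with $C(Y)$ independent of $m$. The heuristic is that one has $3n$ letter-slots among $w_1,\ldots,w_n$, and each edge identification in $Y$ cuts degrees of freedom in the consistent labelling, except when the identification is automatic from the fact that two $2$--cells already share a relator. I would establish this by processing the $2$--cells of $Y$ one at a time (grouped by class $\pi^{-1}(i)$), labelling boundary edges as we go, and tracking free versus forced letter choices. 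Each time a new $2$--cell revisits a previously labelled edge, we lose a degree of freedom (contributing to $\Cancel(Y)$), \emph{unless} the $2$--cell lies in the same class $\pi^{-1}(i)$ as an earlier one and the edge sits at the same (least) position in both, in which case the relevant letter of $w_i$ is already determined and no new constraint is imposed. These automatic identifications are exactly what $\Red(Y)$ counts, so the surviving number of free choices is $3n - \Cancel(Y) + \Red(Y)$.

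Using this count together with the fact that the total number of cyclically reduced length-$3$ words in $F(S_m)$ is $T_m \asymp (2m-1)^3$, the probability that a specific $n$--tuple of independent uniformly random relators fulfills $Y$ is at most $C(Y)\,(2m-1)^{-(\Cancel(Y) - \Red(Y))}$. The number of injective $n$--sub-tuples of the random $\lfloor (2m-1)^{3d}\rfloor$--tuple is at most $(2m-1)^{3dn}$, so by linearity of expectation the expected number of fulfilling injective sub-tuples for this fixed $Y$ is at most
\[
  C(Y)\,(2m-1)^{3dn - \Cancel(Y) + \Red(Y)}.
\]

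Finally, the hypothesis $\Cancel(Y) - \Red(Y) > 3(d+\epsilon)|Y|$, together with $n \leq |Y|$, bounds the exponent above by $3d|Y| - 3(d+\epsilon)|Y| = -3\epsilon|Y| \leq -3\epsilon$ (the case $|Y|=0$ being vacuous). Hence the expectation, and therefore the probability of any fulfilling sub-tuple existing, tends to zero as $m\to\infty$ for each fixed $Y$; a final union bound over the finitely many relevant abstract labelled $2$--complexes completes the proof.
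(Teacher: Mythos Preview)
Your overall framework---the first-moment method, bounding the expected number of fulfilling injective sub-tuples---is the same as the paper's, and the final union bound is fine.  The gap is in your central counting claim, that the number of fulfilling $n$-tuples is at most $C(Y)(2m-1)^{3n-\Cancel(Y)+\Red(Y)}$.  This is false in general, because your assertion that ``these automatic identifications are exactly what $\Red(Y)$ counts'' is incorrect: by Definition~\ref{def:red}, $\Red(Y)$ only tallies, for each edge $e$ and each class $i$, redundancies among those $2$--cells achieving the \emph{least} position of $e$; it ignores redundancies at non-minimal positions.  Concretely, take $Y$ with four $2$--cells $f_1,\dots,f_4$, all with $\pi(f_j)=1$, all containing a single common edge $e$ (with all other boundary edges pairwise distinct), where $e$ sits at position $1$ in $f_1,f_2$ and at position $2$ in $f_3,f_4$.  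Then $n=1$, $\Cancel(Y)=3$, $\Red(Y)=1$, so your formula predicts at most $C(Y)(2m-1)^{3-3+1}=C(Y)(2m-1)$ fulfilling words $w_1$; but the only genuine constraint on $w_1$ is that its first and second letters agree (up to orientation), giving $\asymp(2m-1)^2$ fulfilling words.  In general the true exponent is $3n-C$ for some $C\leq\Cancel(Y)-\Red(Y)$, so your exponent is a \emph{lower} bound, not an upper bound, for the log-count.  Worse, even the corrected exponent $3n-C$ does not obviously suffice: from the hypothesis $\Cancel(Y)-\Red(Y)>3(d+\epsilon)|Y|$ and $C\leq\Cancel(Y)-\Red(Y)$ one cannot deduce $C>3dn$.

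The paper avoids this by a genuinely different route.  It orders the classes so that $m_1\geq\cdots\geq m_n$ (where $m_i=|\pi^{-1}(i)|$), defines for each $2$--cell a quantity $\delta(f)$ and sets $\delta_i=\max_{\pi(f)=i}\delta(f)$, proves the combinatorial inequality $\sum_f\delta(f)\geq\Cancel(Y)-\Red(Y)$ together with the conditional bound $p_i/p_{i-1}\leq(2m-1)^{-\delta_i}$, and then (hidden in ``Following Ollivier'') uses an Abel-summation argument: since $\sum_i m_i\bigl(\delta_i-3(d+\epsilon)\bigr)>0$ with $m_1\geq\cdots\geq m_n\geq 1$, some partial sum satisfies $\sum_{i\leq j}\delta_i>3(d+\epsilon)j$, and for that $j$ the expected number of injective $j$-sub-tuples fulfilling the sub-complex $Y_j$ already tends to $0$.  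This detour through sub-complexes, not a direct count, is what carries the argument.
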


	An induction argument shows that it is sufficient to consider the case every cell of $Y$ is contained in a $2$--cell. Due to the restriction that $|Y|\leq M$, there are only finitely many such abstract complexes to consider, whence the above shows that the probability that at least one of these complexes admits an $n$--tuple of relators fulfilling it goes to $0$. 
	
		The theorem then follows.  Indeed, let $K$ be the complex associated to a random triangular presentation $\langle S\mid R\rangle$ as in Definition~\ref{def:labelled-cplx}. If $Y\ra K$ is a labelled $2$--complex with $n$ different $2$--cells in the image of $Y$, then $Y$ is also the fulfilment of an abstract labelled $2$--complex by an injective $n$--sub-tuple $(w_1,\ldots,w_n)$ of $R$.  Hence, as the probability of there existing a labelled $2$--complex $Y$ with $\Cancel(Y)-\Red(Y)>(d+\epsilon)|Y|3$ and $|Y| \leq m$ is bounded above by the probability of there existing an abstract labelled $2$--complex as in the claim above, and that probability goes to $0$ as $m\to\infty$, the theorem follows.
\end{proof}
	It remains to prove Lemma~\ref{lem:claim}.
\begin{proof}[Proof of Lemma~\ref{lem:claim}]
	In the abstract labelled $2$--complex $Y$, let $m_1, \ldots, m_n$ be the number of $2$--cells labelled by each of $1,\ldots,n$; without loss of generality, we may assume that $m_1 \geq m_2 \geq \cdots \geq m_n \geq 1$, since $\Red(Y)$ is invariant under permutation of labels. Given an edge $e$ and a $2$--cell $f$, we set $\chi(e,f)=1$ if $f$ contains $e$ and the label of $f$ is minimal among all those $2$--cells containing $e$, and $\chi(e,f)=0$ otherwise. Given an edge $e$, denote by $i_e$ the minimum of all labels of $2$--cells containing $e$; recall that all edges are contained in a $2$--cell.

	Given a $2$--cell $f$, let $$\delta(f)=|f|-\sum_{e\in Y^{(1)}}\chi(e,f)\xi(e,f),$$ where $|f|$ is the edge-length of the boundary path of $f$
	\footnote{While this edge-length is always $3$ in this paper, we use this notation to clarify the role of this quantity in the proof.}.
	If we follow the process of choosing our random relators letter-by-letter starting from the first relator in such a way that the labelling of $Y$ is consistent, 
	then $\delta(f)$ is the number of edges (counted with multiplicity) in the boundary path of $f$ whose labels are forced by earlier labellings in the process. 
	Observe here that an edge $e$ that is visited more than once by the boundary path of $f$ can only be in minimal position once in that boundary path, i.e.\ we do not need to worry about its multiplicity in the sum. We have (noting that a sum with empty index set is $0$):
	
	\begin{align*}
	 &\Red(Y)+\sum_{f\in Y^{(2)}}\delta(f) \\
	 &= \sum_{f\in Y^{(2)}}|f|+ \sum_{e\in Y^{(1)}}\Bigl(\sum_{i=1}^n\bigl(\sum_{f\in Y^{(2)}, \pi(f)=i}\xi(e,f)-1\bigr)_+-\sum_{f\in Y^{(2)}}\chi(e,f)\xi(e,f)\Bigr) \\
	 &= \sum_{e\in Y^{(1)}}\deg(e)+\sum_{e\in Y^{(1)}}\Bigl(\sum_{i=1}^n\bigl(\sum_{f\in Y^{(2)}, \pi(f)=i}\xi(e,f)-1\bigr)_+-\sum_{f\in Y^{(2)},\pi(f)=i_e}\xi(e,f)\Bigr) \\
	 &\geq \sum_{e\in Y^{(1)}}\Bigr(\deg(e)+\bigl(\sum_{f\in Y^{(2)},\pi(f)=i_e}\xi(e,f)-1\bigr)_+-\sum_{f\in Y^{(2)},\pi(f)=i_e}\xi(e,f)\Bigl) \\
	 &\geq \sum_{e\in Y^{(1)}}(\deg(e)-1)_+=\Cancel(Y).
	\end{align*}

	Thus if $\delta_i$ is the maximal value of $\delta(f)$ among all $2$--cells $f$ labelled by $i$, 
	\begin{equation*}
	  3|Y|+2(\Red(Y)-\Cancel(Y)) 
	  \geq 3|Y|-2\sum_{f \in Y^{(2)}} \delta(f)
	  \geq 3|Y|-2\sum_{1\leq i\leq n} m_i \delta_i
	\end{equation*}
	(compare \cite[V.b.(2)]{Oll-05-rand-grp-survey}).
	
	For $1 \leq i \leq n$, let $p_i$ be the probability that random cyclically reduced words $(w_1,\ldots,w_i)$ partially fulfil $Y$ (and $p_0=1$).
	Then
	\begin{lemma}[{cf.\ \cite[Lemma 59]{Oll-05-rand-grp-survey}}]
	  We have $p_i / p_{i-1} \leq (2m-1)^{-\delta_i}$.
	\end{lemma}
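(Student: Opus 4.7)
The plan is to directly compute $p_i/p_{i-1}$ as a conditional probability. Fix a tuple $(w_1,\ldots,w_{i-1})$ that partially fulfils $Y$; I want to bound the probability that a uniformly random cyclically reduced length-$3$ word $w_i$ extends this fulfilment. I would pick a $2$-cell $f \in Y^{(2)}$ with $\pi(f)=i$ achieving $\delta(f) = \delta_i$, and view $w_i$ as the label of $\partial f$. Each position $k \in \{1,\ldots,|f|\}$ I would classify as \emph{free} if the $k$-th edge $e_k$ of $\partial f$ satisfies $\chi(e_k,f)\xi(e_k,f) = 1$ and $k$ is the least position of $e_k$ in $\partial f$, and \emph{forced} otherwise; by the definition of $\delta(f)$, there are exactly $\delta_i$ forced positions.

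The main step is to argue that the letter at each forced position is already determined before the free letters of $w_i$ are selected. There are three cases to check. If $\chi(e_k,f) = 0$, then some $2$-cell of label smaller than $i$ contains $e_k$, so the label of $e_k$ is fixed by $(w_1,\ldots,w_{i-1})$. If $\chi(e_k,f)=1$ and $\xi(e_k,f)=0$, then some other $2$-cell $f'$ with $\pi(f')=i$ contains $e_k$ at a strictly smaller position, so placing $w_i$ along $\partial f'$ forces the letter at $e_k$. If $\chi(e_k,f)\xi(e_k,f)=1$ but $k$ is not the least position of $e_k$ in $\partial f$, then $e_k$ appears at an earlier position in $\partial f$ itself, which forces its letter again. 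In every case, consistency of the partial labelling pins down the letter at position $k$ to a value depending only on data selected strictly before the free letters of $w_i$.

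It remains to bound the number of cyclically reduced length-$3$ words compatible with the $\delta_i$ pre-specified letters by $(2m-1)^{3-\delta_i}$, against a total count that equals $2m(4m^2-6m+3) \geq (2m-1)^3$, which yields the desired bound $p_i/p_{i-1} \leq (2m-1)^{-\delta_i}$. The minor technical obstacle is verifying this count in each of the cases $\delta_i \in \{0,1,2,3\}$: cyclic reduction can restrict the admissible choices of free letters further than the na\"ive $(2m-1)$-per-position estimate, but this only decreases the count of compatible $w_i$, so the upper bound still holds. This parallels the calculation in Ollivier's proof of \cite[Lemma 59]{Oll-05-rand-grp-survey}, adapted to the present definition of $\delta(f)$, which refines Ollivier's version to account for repeated edges within a single $\partial f$ and for the additional identifications tracked by $\Red(Y)$.
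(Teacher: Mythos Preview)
Your proposal is correct and follows essentially the same approach as the paper's proof. The paper's argument is terser: it simply counts cyclically reduced words of length three as $(2m-1)^3+1$ (your $2m(4m^2-6m+3)$), asserts that when $\delta_i>0$ the number of allowed $w_i$ is at most $(2m-1)^{3-\delta_i}$, and divides; your three-case analysis of why each forced position is determined by strictly earlier data is exactly the justification the paper leaves implicit.
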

	\begin{proof}
	  (This is a slight variation on Ollivier, as for clarity we keep track of error estimates caused by counting cyclically reduced words.)

	  Cyclically reduced words of length three either start with a repeated symbol or not, and so the total number of these is 
	  $2m\cdot 1 \cdot(2m-1) + 2m(2m-2)(2m-2) = (2m-1)^3+1$.

	  If $\delta_i >0$, then the total number of choices of $w_i$ allowed is $\leq (2m-1)^{3-\delta_i}$, thus the probability of success is $\leq (2m-1)^{3-\delta_i}/((2m-1)^3+1)\leq (2m-1)^{-\delta_i}$.
	\end{proof}
	Following Ollivier, we find that the probability that a $\lfloor(2m-1)^{3d}\rfloor$--tuple $R$ of random triangular relations contains an injective $n$--sub-tuple that fulfils $Y$ satisfies
	\[
	\leq \exp\left(\log(2m-1)\cdot \frac{1}{2}\left(\frac{3|Y|+2(\Red(Y)-\Cancel(Y))}{|Y|} - 3(1-2d)\right)\right),
	\]
	which by our assumption on $Y$ is $< \exp\left(\log(2m-1)\cdot -3\epsilon \right)$,
	which goes to $0$ as $m\to\infty$, whence the result follows.
\end{proof}

\section{Colliding geodesics}

We show that geodesics tend to collide in a random triangular group at reasonably low densities.
Our key proposition is the following.

\begin{proposition}\label{prop:key2}
	Let $d_0 \in (0,\bestboundx)$. Then there exist $\delta,L,k$ depending on $d_0$ so that for $d\leq d_0$, a random triangular group presentation $G$ at density $d$ with Cayley graph $X$ has a.a.s.\ that $X$ is $\delta$-hyperbolic, and for every $x,y \in X$ with $d(x,y) \geq L$, for every geodesic $\gamma$ from $x$ to $y$, there exist at most $k$ elements $g_1,\dots,g_l$ of $G_{x,y,800\delta}$  such that for every $1\leq i<j\leq l$ we have $g_i\gamma\cap g_j\gamma \cap B(x,L+800\delta) = \emptyset$.
\end{proposition}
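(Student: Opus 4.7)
The plan is to derive a uniform bound on $l$ by constructing a single (possibly non-planar) labelled $2$--complex $Y$ out of van Kampen diagrams between consecutive translates $g_i\gamma$, and applying the generalised isoperimetric inequality of Theorem~\ref{thm:isop-reductions2}.

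First I would apply Theorem~\ref{thm:isop2} to obtain $\delta$-hyperbolicity of $X$ with $\delta=\delta(d_0)=12/(1-2d_0)$. Since $d(g_ix,x), d(g_iy,y)\leq 800\delta$ for each $g_i\in G_{x,y,800\delta}$, for $L$ sufficiently large each translate $g_i\gamma$ is contained in $B(x,L+800\delta)$, where by assumption the $g_i\gamma$ are pairwise disjoint. For each $i=1,\dots,l-1$ I would choose short geodesic segments $\alpha_i, \beta_i$ of length at most $1600\delta$ connecting corresponding endpoints of $g_i\gamma$ and $g_{i+1}\gamma$, and take a reduced planar van Kampen diagram $D_i$ whose boundary cycle is $g_i\gamma\cdot\alpha_i\cdot(g_{i+1}\gamma)^{-1}\cdot\beta_i$.

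The technical heart of the argument, and the source of the numerical restriction $d_0<\bestboundshort$, is a structure theorem for such two-geodesic van Kampen diagrams. Following the remark that at densities not too much larger than $1/3$ such diagrams take ``particularly nice forms,'' I would show that each $D_i$ is essentially a \emph{ladder}: a linear chain of triangular $2$--cells glued one after another along single shared edges, whose two long sides are the geodesics $g_i\gamma$ and $g_{i+1}\gamma$. An elementary edge count then gives $|D_i|\geq L-O(\delta)$, since each triangular cell contributes essentially one edge to each long geodesic side.

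Next, using the pairwise disjointness of the $g_i\gamma$, I would glue $D_1,\dots,D_{l-1}$ abstractly along the shared translates $g_{i+1}\gamma$ to form a labelled $2$--complex $Y$ (non-planar in general) with $|Y|=\sum_i|D_i|\geq (l-1)(L-O(\delta))$, while $|\partial Y|\leq 2L+O(l\delta)$, since only the outermost geodesics $g_1\gamma, g_l\gamma$ and the $2(l-1)$ short connector arcs $\alpha_i,\beta_i$ remain on the boundary. After verifying that $\Red(Y)$ contributes only a small error---using reducedness of each $D_i$ together with the fact that at such densities distinct occurrences of the same relator along a geodesic are unlikely---Theorem~\ref{thm:isop-reductions2} yields, roughly, $(l-1)\cdot 3(1-2d_0)\cdot L \lesssim 2L+O(l\delta)$. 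For $L$ large enough relative to $\delta$, this forces $l\leq 1+\tfrac{2}{3(1-2d_0)}+o(1)$, giving the desired integer bound $k=k(d_0)$.

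The main obstacle will be establishing the ladder structure of each $D_i$ and controlling $\Red(Y)$: both rely on a careful combinatorial analysis of how random length-$3$ relators can assemble in diagrams whose boundary contains two long geodesic arcs, and it is precisely this analysis that forces the explicit density restriction $d_0<\bestboundshort$.
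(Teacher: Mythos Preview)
Your overall strategy---build a labelled $2$--complex $Y$ from van Kampen diagrams between translates $g_i\gamma$ and apply Theorem~\ref{thm:isop-reductions2}---matches the paper's, but two of your key steps are either wrong or missing.

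\textbf{The control of $\Red(Y)$ is the real issue.} You assert that $\Red(Y)$ is negligible, citing reducedness of each $D_i$ and a vague claim that ``distinct occurrences of the same relator along a geodesic are unlikely.'' This is not how it works: once you have conditioned on the isoperimetric inequalities you cannot invoke further probabilistic unlikeliness, and in fact $\Red(Y)$ is \emph{not} small---it is of order $L$. The paper's mechanism is entirely different. One partitions the boundary edges of each $D_{i,j}$ into types $E_0, E_1, E_2$ according to whether the third vertex of the adjacent triangle lies off the geodesics or on the opposite geodesic. A reduction in $Y$ at an edge $e$ shared by faces $f\in D_{i,j}$ and $f'\in D_{i,j'}$ with $e\in E_2$ for both would force the opposite vertices to coincide in $X$, i.e.\ $g_j\gamma\cap g_{j'}\gamma\neq\emptyset$, contradicting disjointness. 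Hence $\Red(Y)\leq\sum|E_1(D_{i,j})|$, and one bounds $|E_1|$ by a vertex-counting argument plus Theorem~\ref{thm:isop2} (this is Lemma~\ref{lem:e1}). Your ``ladder structure theorem'' is neither proved nor needed; the $E_1/E_2$ dichotomy is the actual replacement.

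\textbf{Gluing only consecutive pairs loses the stated density range.} The paper glues \emph{all} $\binom{k+1}{2}$ diagrams $D_{i,j}$, one for each pair, along the $k+1$ common geodesic paths; each glued edge then has degree $k$, not $2$. This large degree is what makes the lower bound on $\Red(Y)$ grow quadratically in $k$ while the upper bound grows only like $\binom{k+1}{2}|E_1|$, and comparing the two is exactly what produces the threshold $d_0<\tfrac{11}{12}-\tfrac{\sqrt{41}}{12}$. Your linear gluing of $D_1,\dots,D_{l-1}$ along consecutive geodesics, even with the correct $\Red(Y)$ bound inserted, yields after the same computation only a contradiction when $9(1-2d')^2>8(3d'-1)$, i.e.\ $d_0\lesssim 0.362$, strictly weaker than the claimed bound. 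So your construction cannot deliver the proposition as stated.
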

\begin{remark}
  Note that we allow $g_i=1_G$. 
  The proposition implies 
  that while $k$ parallel geodesics may be 
  possible, any additional geodesic would have to collide into one of the first $k$.
  Indeed, at densities $d>\frac{1}{3}$ we likely have a relation of the form $ab^2$, and so $\gamma=(a^i)$, $b^{-1}\gamma$ are parallel, i.e.\ $k\geq 2$, see Figure~\ref{fig:abb}.
\end{remark}
\begin{figure}
   \def\svgwidth{.5\textwidth}
   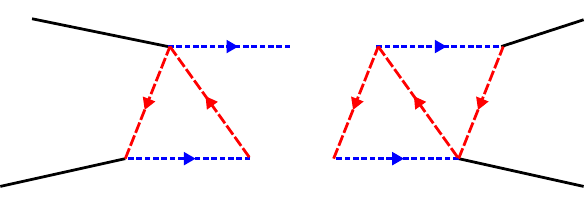
   \caption{Parallel geodesics}
   \label{fig:abb}
\end{figure}

\begin{proof}[Proof of Theorem~\ref{thm:acylindrical}]
	Fix $d_0, d \leq d_0, \delta=\delta(d_0), k=k(d_0)$, and $L=L(d_0)$ from Proposition~\ref{prop:key2}. 
	Suppose $x,y \in X$ are given with $d(x,y) \geq L$.
	Let $\gamma:[0,d(x,y)]\ra X$ be any geodesic from $x$ to $y$, and 
	let $\{g_1,g_2,\dots, g_l\}$ be a maximal subset of $G_{x,y,800\delta}$ such that for every $1\leq i<j\leq l$ we have $g_i\gamma\cap g_j\gamma \cap B(x,L+800\delta)=\emptyset$. Then, by Proposition~\ref{prop:key2}, we have $l\leq k$.
	Given any $h \in G_{x,y,800\delta}$, by maximality of $\{g_1,g_2,\dots, g_l\}$ we have for some $1\leq i\leq l$ that  
	$g_i\gamma \cap h\gamma \cap B(x,L+800\delta)$ is non-empty.
	Thus there exists $t,t'\leq L+2\cdot 800\delta$ so that 
	$g_i\gamma(t)=h\gamma(t')$,
	hence by the freeness of $G$ acting on $X$ we must have $h = g_i\gamma(t)\gamma(t')^{-1}$.
	Thus there are at most $N=k(L+2\cdot800\delta)^2$ possibilities for $h$.
\end{proof}

\begin{proof}[Proof of Proposition~\ref{prop:key2}]
	Let $d_0<\bestboundshort$, $d\leq d_0$, and $d':=(d_0+\bestboundshort)/2$. Notice that $d'=d+\epsilon$ for some $\epsilon>0$, and $d'<\bestboundshort$.
	Set $\delta:=12/(1-2d_0)$. Let $k,L,M$ be (large) integers to be determined that will depend only on $d_0$, with $k\geq 1$ and $L\geq 4\cdot800\delta+4\delta+2$.
	
	Since the conclusions of Theorem~\ref{thm:isop2} and Theorem~\ref{thm:isop-reductions2} (for our fixed values of $d$, $\epsilon$, and $M$) hold a.a.s.\ in a triangular random group, we from now on assume we have a group $G$ given by a triangular presentation for which these conclusions hold. The claim on $\delta$-hyperbolicity is immediate from Theorem~\ref{thm:isop2}.

	Suppose we are given $x,y \in X$ with $d(x,y) \geq L$.  Fix $\gamma$ a geodesic joining $x$ to $y$. Recall that $G_{x,y,800\delta}=\{g\in G:d(x,gx)\leq 800\delta,d(y,gy)\leq 800\delta\}$.

	\medskip

	{\noindent \bf Notation for and dependencies of constants.} In the following, for $i\in\N$ each $C_i$ denotes an appropriate constant $C_i = C_i(\lambda_1,\dots) \in \R$ depending only on $\lambda_1,\dots$ and chosen appropriately to make the inequalities work. 
	Observe here that $\delta$ and $d'$ are functions of $d_0$, whence in the following we will be able to write $C_i(d_0)$ instead of $C_i(\delta)$, respectively $C_i(d')$.
	
	At the end of the proof, we will be able to express $k$ as a function of $d_0$ and $L$ as a function of $d_0$ and $k$, thus making all constants we produce from $d_0,k,L$ depend only on $d_0$.

	\medskip
	\noindent{\bf A diagram $D$ from two non-colliding geodesics.}
	Suppose $g_1,g_2 \in G_{x,y,800\delta}$ are such that $g_1\gamma \cap g_2\gamma \cap B(x,L+800\delta) = \emptyset$. 
	Consider a geodesic joining $g_1x$ to $g_2x$, and take a subpath $\alpha_1$ meeting $g_1\gamma$ and $g_2\gamma$ exactly once each.
	The first vertex in $g_1\gamma$ at distance at least $L-2\cdot 800\delta-2\delta-1$ from $g_1x$ is at distance at most $2\delta$ from some vertex in $g_2\gamma$ by hyperbolicity and the choice of $L$; consider a geodesic between these two vertices and choose a subpath $\alpha_2$ meeting each $g_1\gamma$ and $g_2\gamma$ in exactly one point.
	Together, $\alpha_1, \alpha_2$ and the appropriate subpaths of $g_1\gamma$, $g_2\gamma$ make an embedded loop in $X$ contained in $B(x,L+800\delta)$. 
	The $\alpha_1,\alpha_2$ sides have lengths totalling $\leq 2\cdot 800\delta+2\delta$ and the sides in $g_1\gamma, g_2\gamma$ have lengths $\in [L-4\cdot800\delta-4\delta-1, L]$, moreover, they contain identically labelled subpaths of length $\geq L-4\cdot800\delta-4\delta-1$.

	Let $D$ be a minimal area van Kampen diagram for this loop, necessarily homeomorphic to a disc.

	\medskip
	
	\noindent{\bf Three types of edges in $\partial D$.}
	Since $g_1\gamma, g_2\gamma$ are disjoint geodesics and the presentation is triangular, we can partition the edges in $\partial D$ into three categories:
	\begin{enumerate}
	  \item[0.] it comes from $\alpha_1$ or $\alpha_2$;
	  \item[1.] it comes from $g_1\gamma$ or $g_2\gamma$, and the remaining vertex in the $2$--cell containing it is not in $g_1\gamma\cup g_2\gamma$;
	  \item[2.] it comes from $g_1\gamma$ or $g_2\gamma$, and the remaining vertex in the $2$--cell containing it is in $g_1\gamma\cup g_2\gamma$.
	\end{enumerate}
	Notice that in case 2, if the edge $e$ is in $g_1\gamma$, then the remaining vertex must be in $g_2\gamma$ because $\gamma$ is geodesic, and vice versa if $e$ is in $g_2\gamma$.
	Let us write $\partial D = E_0(D) \sqcup E_1(D) \sqcup E_2(D)$ for this partition; if $D$ is clear, we write $E_0=E_0(D)$, and so on.
	Thus $|\partial D| = |E_0|+|E_1|+|E_2|$.  Note that $|E_0| \leq 2\cdot 800\delta+2\delta$.

	\begin{lemma}\label{lem:e1} There exists a constant $A_1(d_0)$ depending only on $d_0$ such that $$|E_1|\leq \frac{4(3d'-1)}{3(1-2d')}\cdot 2L+A_1(d_0).$$	 
	\end{lemma}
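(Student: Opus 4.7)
The plan is to apply Ollivier's isoperimetric inequality (Theorem~\ref{thm:isop2}) to $D$, and then combine it with an additional combinatorial constraint derived from the planar structure of $D$.

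First I record structural observations. Because $g_1\gamma$ and $g_2\gamma$ are disjoint geodesics in $X$ and the presentation is triangular, no 2-cell of $D$ can carry two edges on the same geodesic (which would produce a length-one chord shortening it) nor an edge on each geodesic (the two geodesics share no vertex and each triangular 2-cell has only three vertices). Hence each 2-cell of $D$ meets $E_1\cup E_2$ in at most one edge, and the 2-cells supporting the $e_1:=|E_1|$ and $e_2:=|E_2|$ boundary edges are pairwise distinct. Writing $c$ for the number of remaining 2-cells, I obtain $|D|=e_1+e_2+c$ and $|\partial D|\leq |E_0|+e_1+e_2$ with $|E_0|\leq 2\cdot 800\delta+2\delta$. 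Exceptional 2-cells whose edges include one of $E_0$, or (in non-generic cases involving coincident relators) more than one edge of $E_1\cup E_2$, contribute only bounded corrections absorbed into $A_1(d_0)$.

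Next I apply Theorem~\ref{thm:isop2} to $D$ with $d'=d+\epsilon$, which gives the first linear constraint
\[
3(1-2d')(e_1+e_2+c)\leq |E_0|+e_1+e_2.
\]
This alone cannot control $e_1$ when $d'>1/3$, since it treats $e_1$ and $e_2$ symmetrically. A second linear constraint is required, and the expected source is the finer planar structure distinguishing the two types of cells: the third vertex of an $E_1$ cell lies in the interior of $D$ (contributing to the interior vertex count $V_{\mathrm{int}}$, which by Euler's formula equals $1+c/2$), whereas that of an $E_2$ cell lies on the opposite geodesic. Combining the interior vertex-degree lower bound ($d(v)\geq 3$ for $v\in V_{\mathrm{int}}$) with the cell-vertex incidence equality $\sum_{v\in V_{\mathrm{int}}} d(v) = e_1+3c$ plus bounded corrections, and a fan/ladder decomposition between $g_1\gamma$ and $g_2\gamma$ in the style of McCammond--Wise, should furnish the needed second constraint.

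Eliminating $c$ from the two inequalities and substituting $e_2\leq 2L-e_1$ (which holds because the geodesic subpaths each have length at most $L$) will then yield $3(1-2d')e_1 \leq 8(3d'-1)L + A_1(d_0)$, which rearranges to the claim $|E_1|\leq \frac{4(3d'-1)}{3(1-2d')}\cdot 2L + A_1(d_0)$. The threshold $\bestboundshort=(11-\sqrt{41})/12$ arises as the root of $18(d')^2-33d'+10=0$, the value of $d'$ at which the coefficient $\frac{4(3d'-1)}{3(1-2d')}$ is still strictly less than $1$, so the resulting bound is a genuine strengthening of the trivial estimate $e_1\leq 2L$. The main obstacle is producing the second linear inequality with the correct coefficients: a naive second application of Theorem~\ref{thm:isop2} to a sub-diagram (such as $D$ with its $E_1$-adjacent 2-cells pruned) does \emph{not} improve the first inequality, because pruning boundary-adjacent cells only raises the boundary-to-area ratio, so the sub-diagram's isoperimetric bound is already implied by the original on $D$. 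Therefore the proof must exploit finer planar and combinatorial structure (degree bounds, Euler's formula, fan/ladder patch analysis) rather than a further direct application of isoperimetry, and the planar geometry has to be tight enough to yield the exact coefficient $\frac{4(3d'-1)}{3(1-2d')}$ rather than a weaker constant.
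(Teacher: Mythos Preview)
Your overall framework is right: the proof combines the isoperimetric inequality with a second combinatorial constraint coming from Euler's formula and a vertex count. But you never actually produce that second constraint, and the route you sketch does not work. The identity $\sum_{v\in V_{\mathrm{int}}} d(v)=e_1+3c$ is not justified (a $c$-cell need not have all three vertices interior: it may touch a geodesic in a vertex without containing a geodesic edge), and even granting the inequality $\sum_{v\in V_{\mathrm{int}}} d(v)\leq e_1+3c$, combining it with $d(v)\geq 3$ and $V_{\mathrm{int}}\approx c/2$ only gives $3c/2 \leq e_1+3c$, which is vacuous. No fan/ladder analysis is needed or helpful here; you explicitly flag this step as ``the main obstacle'' and leave it open, so the argument as written is incomplete.

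The missing observation is a direct pigeonhole bound. For each edge $e\in E_1\cap g_1\gamma$, the third vertex $v(e)$ of its triangle is off both geodesics; since all endpoints of such edges lie at distance~$1$ from $v(e)$ on the geodesic $g_1\gamma$, at most two edges of $E_1\cap g_1\gamma$ can share the same third vertex. The same holds on the $g_2\gamma$ side, so
\[
|E_1|\ \leq\ 4\,\bigl|\{\text{vertices of }D\text{ not on }g_1\gamma\cup g_2\gamma\}\bigr|\ \leq\ 4\bigl(V-|E_1|-|E_2|\bigr).
\]
Now Euler's formula gives $V=1+\tfrac{1}{2}(|D|+|\partial D|)$, hence $|E_1|\leq 2|D|-2(|E_1|+|E_2|)+C(d_0)$; feeding in $|D|\leq |\partial D|/(3(1-2d'))$ from Theorem~\ref{thm:isop2} and $|E_1|+|E_2|\leq 2L$ yields exactly $|E_1|\leq \frac{4(3d'-1)}{3(1-2d')}\cdot 2L + A_1(d_0)$. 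In your language this is precisely the inequality $e_1\leq 2c+O(1)$ you were looking for, and it comes from a one-line geodesic argument rather than degree sums.
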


	In other words, when $d$ is only a little more than $1/3$ and $L \geq (|E_1|+|E_2|)/2+C_1(d_0)$ is large, the fraction $|E_1|/(|E_1|+|E_2|)$ is small.  So most edges are in $E_2$, and the corresponding triangles meet the other side of $D$.

	\medskip

	\begin{proof}	
	Let $V$ be the number of vertices in $D$.
	Since $\gamma$ is a geodesic, for each edge in $g_1\gamma \cap E_1$, the remaining vertex in the $2$--cell containing it can correspond to at most two such edges in $g_1\gamma \cap E_1$.  Taking into account $g_2\gamma$ as well and the fact that $V-|E_1|-|E_2|$ is an upper bound for the number of vertices not in $g_1\gamma\cup g_2\gamma$, we obtain $|E_1| \leq 4(V-|E_1|-|E_2|)$.

	\medskip

	The total number of edges in $D$ is $\frac{1}{2}( 3|D|+|\partial D|)$, so by Euler's formula $V-E+F=1$, we have
	$V = 1+ \frac{1}{2}( 3|D|+|\partial D| )-|D|=1+\frac{1}{2}(|D|+|\partial D|)$.

	This, along with Theorem~\ref{thm:isop2}, gives
	\begin{align*}
	  |E_1| & \leq  4\left(V - (|E_1|+|E_2|) \right)
	   = 4 \left( 1+ \frac12\left( |D| + |E_0| - (|E_1|+|E_2|) \right)\right)
	   \\ & \leq 2|D| - 2(|E_1|+|E_2|) + C_2(d_0)
	   \\ & \leq 2 \frac{|E_1|+|E_2|}{3(1-2d')}-2(|E_1|+|E_2|) + C_3(d_0)
	   = \frac{4(3d'-1)}{3(1-2d')}(|E_1|+|E_2|) + C_3(d_0)
	   \\ & \leq \frac{4(3d'-1)}{3(1-2d')}\cdot 2L + C_4(d_0)
	\end{align*}
	We set $A_1(d_0):=C_4(d_0)$.
	\end{proof}

	\medskip
	\noindent{\bf A complex $Y$ from $k+1$ non-colliding geodesics.}
	Now, in order to produce a contradiction in the end, suppose we have $k+1$ elements $\{g_1,\ldots,g_{k+1}\} \subset G_{x,y,800\delta}$ so that the geodesics $g_i\gamma$ are pairwise disjoint in $B(x,L+800\delta)$.
	For each pair $(g_i\gamma,g_j\gamma)$, $i < j$, we can do the construction above to find a reduced van Kampen diagram $D_{i,j}$.  For each $i$ there is a common subpath of the geodesic $g_i\gamma$ of length $\geq L-4\cdot800\delta-4\delta-1$ which appears in the boundary of each diagram $D_{i,j}$ for $j>i$ and $D_{j,i}$ for $j<i$.
	We construct a labelled 2-complex $Y\to K$ from these $\binom{k+1}{2}$ diagrams by identifying, for each $i$, the paths corresponding to $g_i\gamma$ in the $k$ diagrams containing such a path. (For the purpose of illustration, observe that $Y$ is homeomorphic to the product of a complete graph on $k+1$ vertices with a compact interval.)   There exists an upper bound depending only on $d_0,k,L$ for the number of $2$--cells $Y$, and we choose $M$ to be that bound. Therefore, we can apply Theorem~\ref{thm:isop-reductions2} to $Y$.

	\begin{lemma}[Lower bound for $\Red(Y)$]\label{lem:lowerred} There exists $A_2(d_0,k)$ depending only on $d_0$ and $k$ such that
	\begin{align*}
		\Red(Y) \geq \frac{3}{2}(1-2d')\binom{k+1}{2}2L + \frac{(k+1)(k-2)}{2}L +A_2(d_0,k),
	\end{align*}	
	\end{lemma}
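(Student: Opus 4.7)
The plan is to apply Theorem~\ref{thm:isop-reductions2} to $Y$, compute $\Cancel(Y)$ directly from the gluing structure of $Y$, and then obtain a lower bound on $|Y|$ by reusing the non-geodesic-vertex estimate already developed in the proof of Lemma~\ref{lem:e1}.

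First, Theorem~\ref{thm:isop-reductions2} applied to $Y$ (valid since $M$ was chosen large enough to bound $|Y|$) gives $\Red(Y)\geq \Cancel(Y)-3d'|Y|$. Next, one reads off $\Cancel(Y)$ from the degree structure of $Y$: every edge is either (a) interior to some $D_{i,j}$ with $\deg_Y=2$, (b) lying in $\alpha_1,\alpha_2$ or in an unshared tail of a geodesic side with $\deg_Y=1$, or (c) on the shared portion of some $g_i\gamma$ with $\deg_Y=k$ (one cell contributed by each of the $k$ diagrams $D_{i,j}$ with $j\neq i$). Using that each $D_{i,j}$ is planar and triangular, so its number of interior edges is $(3|D_{i,j}|-|\partial D_{i,j}|)/2$, we obtain
\begin{equation*}
	\Cancel(Y) = \frac{3|Y|-\sum_{i<j}|\partial D_{i,j}|}{2} + (k-1)\sum_{i=1}^{k+1} S_i,
\end{equation*}
where $S_i$ is the length of the shared portion of $g_i\gamma$ in $Y$. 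Substituting into the bound for $\Red(Y)$, and using the construction bounds $|\partial D_{i,j}|\leq 2L+O(\delta)$ and $S_i\geq L-O(\delta)$ together with the identity $(k-1)(k+1)-\binom{k+1}{2}=\tfrac{(k+1)(k-2)}{2}$, produces
\begin{equation*}
	\Red(Y)\geq \tfrac{3(1-2d')}{2}|Y| + \tfrac{(k+1)(k-2)}{2}L - O(\delta k^2).
\end{equation*}

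It remains to establish $|Y|\geq\binom{k+1}{2}(2L-O(\delta))$. The key observation is that the inequality $|E_1|\leq 4(V-|E_1|-|E_2|)$ from Lemma~\ref{lem:e1}'s proof, combined with Euler's formula $V=1+\tfrac12(|D_{i,j}|+|\partial D_{i,j}|)$, rearranges---without invoking any isoperimetric inequality---to $3|E_1|+2|E_2|\leq 2|D_{i,j}|+O(\delta)$. Reading this as a lower bound on area yields
\begin{equation*}
	|D_{i,j}|\geq \tfrac{3|E_1|+2|E_2|}{2}-O(\delta)\geq |E_1|+|E_2|-O(\delta)\geq 2L-O(\delta),
\end{equation*}
since $|E_1|+|E_2|\geq S_i+S_j\geq 2L-O(\delta)$ from the construction. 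Summing over the $\binom{k+1}{2}$ diagrams and substituting into the previous inequality proves the lemma, absorbing all $O(\delta k^2)$ error terms into $A_2(d_0,k)$ (which depends only on $d_0$ and $k$).

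The main subtlety is recognising that the non-geodesic-vertex count already developed for Lemma~\ref{lem:e1} can be repurposed as a \emph{lower} bound on the area of each $D_{i,j}$, complementary to the isoperimetric \emph{upper} bound $|D_{i,j}|\leq 2L/(3(1-2d'))+O(\delta)$ coming from Theorem~\ref{thm:isop2}. Without such a lower bound, we would be unable to obtain the factor $\binom{k+1}{2}2L$ in the first term and would instead need a separate thickness argument for random triangular van Kampen diagrams.
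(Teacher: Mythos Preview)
Your proof is correct and follows the same overall strategy as the paper: apply Theorem~\ref{thm:isop-reductions2} to get $\Red(Y)\geq \Cancel(Y)-3d'|Y|$, compute $\Cancel(Y)$ from the gluing structure to obtain $\Red(Y)\geq \tfrac{3}{2}(1-2d')|Y|+\tfrac{(k+1)(k-2)}{2}L+A_2(d_0,k)$, and then bound $|Y|$ from below by $\binom{k+1}{2}2L+C(d_0,k)$.

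The only place you diverge from the paper is in establishing the lower bound $|D_{i,j}|\geq 2L-O(\delta)$. You repurpose the vertex-count inequality $|E_1|\leq 4(V-|E_1|-|E_2|)$ from Lemma~\ref{lem:e1} together with Euler's formula to deduce $3|E_1|+2|E_2|\leq 2|D_{i,j}|+O(\delta)$, hence $|D_{i,j}|\geq |E_1|+|E_2|-O(\delta)$. This is valid but unnecessarily indirect. The paper instead uses the one-line observation that, since the two geodesic sides $g_i\gamma$ and $g_j\gamma$ are disjoint and $\gamma$ is geodesic, each triangular $2$--cell of $D_{i,j}$ can contain at most one edge from $E_1\cup E_2$ (two edges of a triangle share a vertex, so two geodesic-side edges in the same $2$--cell would force either a shortcut in a geodesic or a common vertex of the two disjoint geodesics). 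This immediately gives $|D_{i,j}|\geq |E_1|+|E_2|\geq 2L+C(d_0)$. Your route works, but the direct pigeonhole is cleaner and avoids invoking Euler's formula or the $E_1$ vertex-count at all.
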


	\begin{proof}
		To estimate $\Red(Y)$ from below, by Theorem~\ref{thm:isop-reductions2} is suffices to estimate $\Cancel(Y)$ from below.
	In each van Kampen diagram $D_{i,j}$, 
	$3|D_{i,j}|=|\partial D_{i,j}|+2\Cancel(D_{i,j})\leq 2L+ C_5(d_0)+2\Cancel(D_{i,j})$.
		So the contribution to the sum in $\Cancel(Y)=\sum_{e\in Y^{(1)}} (\deg(e)-1)_+$ coming from those $e$ which are interior edges of some $D_{i,j} \subset Y$ for some $i<j$ is at least
	\[
	\sum_{i<j} \left(\frac32 |D_{i,j}| -L-\frac{C_5(d_0)}{2}\right) \geq \frac32 |Y| - \binom{k+1}{2}L + C_6(d_0,k).
	\]
		Meanwhile, for each of the at least $(k+1)L+C_7(d_0,k)$ edges glued together to make $Y$ there is a contribution of $k-1$ to the sum in $\Cancel(Y)$.
	So in total,
	\begin{align*}
	\Cancel(Y) &\geq \frac32|Y| - \binom{k+1}{2}L+(k+1)(k-1)L+C_8(d_0,k) \\
		&= \frac32|Y| +\frac{(k+1)(k-2)}{2}L+C_8(d_0,k).
	\end{align*}
	So by Theorem~\ref{thm:isop-reductions2} we have
	\[
		\frac32|Y| +\frac{(k+1)(k-2)}{2}L+C_8(d_0,k)
		\leq d'|Y|3 +\Red(Y),
	\]
	i.e.\
	\begin{align*}
		\Red(Y) & \geq \frac32(1-2d')|Y| +\frac{(k+1)(k-2)}{2}L+C_8(d_0,k)
		\\ & \geq \frac{3}{2}(1-2d')\binom{k+1}{2}2L + \frac{(k+1)(k-2)}{2}L +C_{9}(d_0,k),
	\end{align*}
	where the second inequality follows from $|D_{i,j}| \geq 2L+C_{10}(d_0)$ since each $2$--cell can meet at most one edge in the disjoint bounding geodesics. We set $A_2(d_0,k):=C_{9}(d_0,k)$.
	\end{proof}
	
	\medskip
	
	\begin{lemma}[Upper bound for $\Red(Y)$]\label{lem:upperred} Let $A_1(d_0)$ be as in Lemma~\ref{lem:e1}. Then 
	\[
		\Red(Y) \leq\binom{k+1}{2} \frac{4(3d'-1)}{3(1-2d')} \cdot 2L +\binom{k+1}{2}A_1(d_0).
	\]
	 
	\end{lemma}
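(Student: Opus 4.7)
The plan is to prove the sharper inequality $\Red(Y)\leq\sum_{i<j}|E_1(D_{i,j})|$ and finish by applying Lemma~\ref{lem:e1} to each summand. Write $\Red(Y)=\sum_{e\in Y^{(1)}}\sum_{r\in I}(N_{e,r}-1)_+$ with $N_{e,r}:=\#\{f\in Y^{(2)}:\pi(f)=r,\ \xi(e,f)=1\}$, and I will bound the contribution of each edge.

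First I would dispose of edges not lying on any geodesic $g_i\gamma$. Each $1$-cell of $Y$ is either (i) on some $g_a\gamma$ and shared by the $k$ diagrams $D_{a,j}$ with $j\neq a$, or (ii) interior to a unique $D_{i,j}$, or (iii) on $\alpha_1$ or $\alpha_2$ of a unique $D_{i,j}$. In case (iii) the edge lies in a single $2$-cell, so $N_{e,r}\leq 1$. In case (ii) the edge lies in exactly two cells of the single reduced diagram $D_{i,j}$; since $\Red(D_{i,j})=0$, these two cells either carry distinct relator labels or have distinct first positions for $e$, so again $N_{e,r}\leq 1$ for every $r$.

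The main step is to bound the contribution of a geodesic edge $e\subset g_a\gamma$, which lies in exactly $k$ cells of $Y$, one per diagram $D_{a,j}$ with $j\neq a$. The key observation is: if $f\in D_{a,b}$ and $f'\in D_{a,b'}$ with $b\neq b'$ both contain $e$, share the same relator label, and both satisfy $\xi(e,\cdot)=1$, then the first position of $e$ agrees in both boundary paths, and moreover $e$ must carry the same orientation in both (since no letter of $S^{\pm}$ is self-inverse). Propagating the common relator letters around the length-$3$ boundary from the fixed endpoints of $e$ in $X$ then shows that $f$ and $f'$ map to the same triangle in the Cayley graph, and in particular share the same third vertex $w\in X$. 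If both $f,f'$ are of type $E_2$, then $w\in g_b\gamma\cap g_{b'}\gamma$; since $w$ is one edge away from $e$ which lies inside the active portions of the geodesics, $w$ sits in $B(x,L+800\delta)$ where the geodesics $g_i\gamma$ are pairwise disjoint by hypothesis, a contradiction. Hence, among the cells with label $r$ at $e$ satisfying $\xi=1$, at most one is of type $E_2$.

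Consequently, $(N_{e,r}-1)_+$ is bounded above by the number of type-$E_1$ cells at $e$ with label $r$ and $\xi=1$. Summing over $r$, the contribution of $e$ is at most $\#\{j\neq a:e\in E_1(D_{a,j})\}$, and summing over the geodesic edges of $Y$ yields
\[
\Red(Y)\leq\sum_{i<j}|E_1(D_{i,j})|.
\]
Applying Lemma~\ref{lem:e1} to each summand gives the claim. The most delicate point I foresee is the ``same triangle in $X$'' step: confirming that identical first positions and relator labels, together with matching orientations of $e$, force identical images of all three vertices in the Cayley graph, and verifying that the resulting third vertex $w$ genuinely lies inside the ball where pairwise disjointness is assumed.
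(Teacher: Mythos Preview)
Your proposal is correct and follows essentially the same route as the paper: both arguments reduce to showing $\Red(Y)\leq\sum_{i<j}|E_1(D_{i,j})|$ by observing that if two cells $f,f'$ at a geodesic edge $e$ carry the same relator with $\xi(e,\cdot)=1$, then their third vertices map to the same point of $X$, so at most one such cell can be of type~$E_2$ without forcing two of the geodesics $g_i\gamma$ to intersect in $B(x,L+800\delta)$. Your treatment is slightly more explicit in handling the $\alpha_1,\alpha_2$ edges separately and in spelling out the orientation argument, while the paper phrases the key step as the union $D_{1,2}\cup_{g_1\gamma}D_{1,3}$ having a ``possible reduction across $e$''; the content is the same. (One cosmetic point: a geodesic edge may lie in fewer than $k$ of the diagrams $D_{a,j}$, not exactly $k$, since only a common subpath of $g_a\gamma$ is guaranteed to be shared; this only helps your bound.)
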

	
	\begin{proof}
		Recall $\Red(Y) = \sum_{e \in Y^{(1)}}\sum_{i \in I} \left(\sum_{f\in Y^{(2)}, \pi(f)=i} \xi(e,f)-1\right)_+$.
		Since each $D_{i,j} \subset Y$ is a reduced van Kampen diagram, the summand corresponding to any interior edge $e$ of $D_{i,j}$ contributes $0$ to $\Red(Y)$.
		Thus all contributions to $\Red(Y)$ come from edges in the $k+1$ glued geodesics.

		If for some $e \in Y^{(1)}$ and $i \in I$ we have a summand $1\leq t=(\sum_{f\in Y^{(2)},\pi(f)=i}\xi(e,f)-1)_+$ in $\Red(Y)$, then at least $t$ of the $\geq t+1$ $2$--cells with $\pi(f)=i$ in which $e$ occurs in the same (minimal) position must have $e$ belonging to $E_1$ in their diagrams. Otherwise, consider two of these $2$--cells $f_2$ and $f_3$ which have $e$ in $E_2$ for their diagrams, say for example $f_2$ is in $D_{1,2}$ and $f_3$ is in $D_{1,3}$, and $e \in E_2(D_{1,2})$, $e\in E_2(D_{1,3})$, and $\xi(e,f_2)=\xi(e,f_3)=1$. The 1-skeleton of the van Kampen diagram $D_{1,2}\cup_{g_1\gamma}D_{1,3}$ admits a well-defined map $f$ to $X$, and has a possible reduction across $e$. If, for $i= 2,3$, we denote by $v_i$ the vertices of $f_i$ not in $e$, then $f(v_2)=f(v_3)$, i.e.\ $g_2\gamma\cap g_3\gamma\cap B(x,L+800\delta)\neq\emptyset$, contradicting our assumption. Thus, using Lemma~\ref{lem:e1}, we have
	\[
		\Red(Y) \leq \sum_{i<j} |E_1(D_{i,j})| \leq\binom{k+1}{2} \frac{4(3d'-1)}{3(1-2d')} \cdot 2L +\binom{k+1}{2}A_1(d_0). \qedhere
	\]
	\end{proof}

	\noindent{\bf Conclusion.}
	Let $A_3(d_0,k):=\binom{k+1}{2}A_1(d_0)-A_2(d_0,k)$. We show that whenever $k$ is large enough, depending on $d_0$ and, subsequently, $L$ is large enough, depending on $d_0$ and $k$, we have
	\[
	 \binom{k+1}{2}\cdot \frac{4(3d'-1)}{3(1-2d')}\cdot 2 +\frac{A_3(d_0,k)}{L}
	 < 
	 \frac{3}{2}(1-2d')\binom{k+1}{2}2 + \frac{(k+1)(k-2)}{2},
	\]
	which, using Lemmas \ref{lem:lowerred} and \ref{lem:upperred} implies the contradiction $\Red(Y)<\Red(Y)$. The above is equivalent, via dividing through by $(k+1)k$, to 
	\[
	 \frac{4(3d'-1)}{3(1-2d')} +\frac{A_3(d_0,k)}{L (k+1)k}
	 < \frac{3}{2}(1-2d') + \frac{(k+1)(k-2)}{2(k+1)k} 
	 = (2-3d') - \frac{1}{k} .
	\]
	The condition
	\[
	 \frac{4(3d'-1)}{3(1-2d')} < 2-3d'
	\]
	is equivalent to $d' < \bestboundshort$, which we assumed. Thus, we may choose $k$ only depending on $d'$, which in turn only depends on $d_0$, such that
	\[
	 \frac{4(3d'-1)}{3(1-2d')} < 2-3d'-\frac{1}{k}.
	\]
	Subsequently we choose $L$ such that $\Red(Y)<\Red(Y)$, contradicting our assumptions.
\end{proof}

\bibliographystyle{amsalpha} %
\bibliography{biblio} %

\end{document}